\numberwithin{equation}{section}
\theoremstyle{plain}
\newtheorem{theorem}{Theorem}[section]
\newtheorem{lemma}[theorem]{Lemma}
\newtheorem{proposition}[theorem]{Proposition}
\theoremstyle{definition}
\newtheorem{definition}[theorem]{Definition}
\theoremstyle{remark}
\newtheorem{remark}[theorem]{Remark}
\DeclareMathOperator{\dv}{div}
\DeclareMathOperator{\tr}{tr}
\providecommand{\set}[1]{\{#1\}}
\providecommand{\Set}[1]{\left\{#1\right\}}
\providecommand{\abs}[1]{\lvert#1\rvert}
\providecommand{\Abs}[1]{\left\lvert#1\right\rvert}
\providecommand{\biggabs}[1]{\biggl\lvert#1\biggr\rvert}
\providecommand{\norm}[1]{\lVert#1\rVert}
\providecommand{\Norm}[1]{\left\lVert#1\right\rVert}
\newcommand{\bR}{\mathbb{R}}
\begin{document}


\title[Fundamental solutions of parabolic equations in non-divergence form]{Estimates for fundamental solutions of parabolic equations in non-divergence form}

\author[H. Dong]{Hongjie Dong}
\address[H. Dong]{Division of Applied Mathematics, Brown University,
182 George Street, Providence, RI 02912, United States of America}
\email{Hongjie\_Dong@brown.edu}
\thanks{H. Dong was partially supported by the Simons Foundation, grant no. 709545, a Simons fellowship, grant no. 007638, and the NSF under agreement DMS-2055244.}

\author[S. Kim]{Seick Kim}
\address[S. Kim]{Department of Mathematics, Yonsei University, 50 Yonsei-Ro, Seodaemun-gu, Seoul 03722, Republic of Korea.}
\email{kimseick@yonsei.ac.kr}
\thanks{S. Kim was partially supported by the National Research Foundation of Korea under agreements NRF-2019R1A2C2002724 and NRF-2022R1A2C1003322.}

\author[S. Lee]{Sungjin Lee}
\address[S. Lee]{Department of Mathematics, Yonsei University, 50 Yonsei-ro, Seodaemun-gu, Seoul 03722, Republic of Korea}
\email{sungjinlee@yonsei.ac.kr}

\keywords{Fundamental solution; Parabolic equation in non-divergence form; Dini mean oscillation}

\begin{abstract}
We construct the fundamental solution of  second order parabolic equations in non-divergence form under the assumption that the coefficients are of Dini mean oscillation in the spatial variables. We also prove that the fundamental solution satisfies a sub-Gaussian estimate.
In the case when the coefficients are Dini continuous in the spatial variables and measurable in the time variable, we establish the Gaussian bounds for the fundamental solutions.
We present a method that works equally for second order parabolic systems in non-divergence form.
\end{abstract}

\maketitle

\section{Introduction and main results}

We consider second order parabolic operator $P$ in non-divergence form
\begin{equation*}				
Pu=\partial_t u - a^{ij}(t,x)D_{ij}u
\end{equation*}
in $\bR^{d+1}$.
Here and below, we use the summation convention over repeated indices.
We assume that the coefficients $\mathbf A=(a^{ij})$ are symmetric and satisfy the uniform parabolicity condition
\begin{equation}				\label{ellipticity}
\lambda \abs{\xi}^2 \le a^{ij}(t,x) \xi_i \xi_j \le \Lambda \abs{\xi}^2,\quad \forall \xi \in \bR^d,\,\, \forall (t,x) \in \bR^{d+1}.
\end{equation}
In this article, we are concerned with the fundamental solution of the operator $P$.
By the fundamental solution, we mean a function $\Gamma(t,x,s,y)$ formally satisfying
\[
P  \Gamma(\cdot, \cdot,s,y)=\delta_{{s,y}}(\cdot, \cdot)\;\text{ in }\;\bR^{d+1},
\]
or equivalently
\[
P\Gamma(\cdot, \cdot, s,y) =0\;\text{ in }\; (s,\infty )\times \bR^d,\quad \lim_{t\to s+}\Gamma(t,\cdot, s,y)=\delta_y(\cdot)\;\text{ on }\;\bR^d.
\]
We show that if the coefficients $\mathbf A=(a^{ij})$ are of Dini mean oscillation in $x$, then the fundamental solution $\Gamma(t,x,s,y)$ exists and satisfies certain estimates, in particular a sub-Gaussian estimate.
Moreover, if the coefficients are Dini continuous in $x$, then the fundamental solution enjoys the usual Gaussian bounds.
We emphasize that our methods are also applicable to parabolic systems of second order and this is one of the novelties of the paper.

Before we state our main theorems, let us introduce some basic definitions.
We define the parabolic distance between  $X=(t,x)$ and $Y=(s,y)$ in $\bR^{d+1}$ by
\[
\abs{X-Y}=\max \left(\abs{x-y}, \sqrt{\abs{t-s}}\right).
\]
We define the $(d+1)$-dimensional cylinders $Q_r(X)$, $Q_r^+(X)$, and $Q_r^-(X)$, by
\begin{align*}
Q_r(X)&=\set{ Y\in \bR^{d+1}: \abs{Y-X} < r}=(s-r^2, s+r^2)\times B_r(x),\\
Q_r^+(X)&=(s, s+r^2)\times B_r(x), \quad\text{and}\quad Q_r^-(X)=(s-r^2, s)\times B_r(x).
\end{align*}
For $X=(t,x) \in \bR^{d+1}$ and $r>0$, we define
\begin{equation*}			
\omega_{\mathbf A}^{\mathsf x}(r, X):= \fint_{Q_r^-(X)} \,\abs{\mathbf A(s,y)- \bar{\mathbf A}^{\mathsf x}_{x,r}(s)}\,dy ds,\;\text{ where }\;\bar{\mathbf A}^{\mathsf x}_{x,r}(s) :=\fint_{B_r(x)} \mathbf A(s,y)\,dy.
\end{equation*}
Then for a subset $Q$ of $\bR^{d+1}$, we define
\begin{equation*}
\omega_{\mathbf A}^{\mathsf x}(r, Q):=\sup\Set{\omega_{\mathbf A}^{\mathsf x}(r, X): X \in Q} \quad \text{and}\quad  \omega_{\mathbf A}^{\mathsf x}(r):=\omega_{\mathbf A}^{\mathsf x}(r, \bR^{d+1}).
\end{equation*}
We say that $\mathbf A$ is of \emph{Dini mean oscillation in $x$} over $Q$ and write $\mathbf A \in \mathsf{DMO_x}(Q)$ if $\omega_{\mathbf A}^{\mathsf x}(r, Q)$ satisfies the Dini condition
\[
\int_0^1 \frac{\omega_{\mathbf A}^{\mathsf x}(r,Q)}{r}\,dr <+\infty.
\]
The adjoint operator $P^\ast$ is given by
 \begin{equation*}				
 P^\ast u=-\partial_t u- D_{ij}(a^{ij}(t,x)u).
 \end{equation*}

We are now ready to state the main results.
\begin{theorem}			\label{thm1}
Assume that $\mathbf A=(a^{ij})$ satisfies \eqref{ellipticity} and belongs to $\mathsf{DMO_x}(\bR^{d+1})$.
Then, there exist unique fundamental solutions $\Gamma(X,Y)=\Gamma(t,x,s,y)$ and $\Gamma^\ast(X,Y)=\Gamma^\ast(t,x,s,y)$ for the operators $P$ and $P^\ast$, respectively, and they satisfy the symmetry relation
\begin{equation}		\label{symmetry}
\Gamma(t,x,s,y)=\Gamma^\ast(s,y,t,x).
\end{equation}
The fundamental solution $\Gamma$ is continuous in $\bR^{d+1}\times \bR^{d+1} \setminus \set{(X,X): X \in \bR^{d+1}}$ and
\[
\Gamma(t,x,s,y)=0 \quad\text{if }\;t<s.
\]
Also, for each $Y \in \bR^{d+1}$, $D_x\Gamma(\cdot, Y)$ and $D^2_x\Gamma(\cdot, Y)$ are continuous in $\bR^{d+1} \setminus \set{Y}$;
if $\mathbf{A}$ is continuous, then $\partial_t\Gamma(\cdot, Y)$ is continuous in $\bR^{d+1} \setminus \set{Y}$ as well.
Moreover, for any $R_0>0$, there exist constants $C=C(d, \lambda, \Lambda,\omega_{\mathbf A}^{\mathsf x}, R_0)$ such that we have
\begin{equation}						\label{eq2232w}
\abs{\Gamma(X,Y)} \le C \abs{X-Y}^{-d}
\end{equation}
for any $X$, $Y \in \bR^{d+1}$ satisfying $0<\abs{X-Y} <R_0$.
\end{theorem}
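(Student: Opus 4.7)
The plan is to construct $\Gamma$ by a vanishing-viscosity approximation. Mollifying $\mathbf A$ in the spatial variable, I obtain smooth coefficients $\mathbf A^{(\varepsilon)}$ that satisfy \eqref{ellipticity} with the same constants and whose $\mathsf{DMO_x}$ modulus is controlled by $\omega_{\mathbf A}^{\mathsf x}$. Classical parabolic theory (Eidelman, Friedman) furnishes smooth fundamental solutions $\Gamma^{(\varepsilon)}(X,Y)$ for $P^{(\varepsilon)} = \partial_t - a^{ij,(\varepsilon)} D_{ij}$. The strategy is to prove uniform (in $\varepsilon$) versions of all the asserted properties and then extract a subsequential limit $\Gamma = \lim_k \Gamma^{(\varepsilon_k)}$.

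The crux is the pointwise bound \eqref{eq2232w} at the approximate level, obtained via a local maximum principle combined with a mass estimate. Fix $X_0 = (t_0, x_0)$ and $Y_0 = (s_0, y_0)$ with $r := \abs{X_0 - Y_0} \in (0, R_0]$ and $t_0 > s_0$ (else $\Gamma^{(\varepsilon)}(X_0, Y_0) = 0$), and set $\rho := r/4$. Since $\mathbf A^{(\varepsilon)}$ is smooth and uniformly elliptic, classical Aronson-type bounds yield $\int_{\bR^d} \Gamma^{(\varepsilon)}(\tau, z, s_0, y_0)\,dz \le C = C(d, \lambda, \Lambda)$ for $\tau > s_0$, uniformly in $\varepsilon$. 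The function $u := \Gamma^{(\varepsilon)}(\cdot, Y_0)$ is a non-negative classical solution of $P^{(\varepsilon)} u = 0$ on a cylinder of parabolic scale $\rho$ near $X_0$ that is disjoint from $\set{Y_0}$ by the triangle inequality. The parabolic local maximum principle of Krylov–Safonov type (with constants depending only on $d, \lambda, \Lambda$) then gives
\[
\Gamma^{(\varepsilon)}(X_0, Y_0) \le \frac{C}{\rho^{d+2}} \int_{t_0 - C\rho^2}^{t_0 + C\rho^2} \int_{\bR^d} \Gamma^{(\varepsilon)}(\tau, z, Y_0)\,dz\,d\tau \le C \rho^{-d},
\]
which is \eqref{eq2232w} uniform in $\varepsilon$.

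Regularity away from the diagonal comes from the $\mathsf{DMO_x}$ Schauder-type estimates for parabolic non-divergence equations developed earlier in the paper, which furnish moduli of continuity for $D_x \Gamma^{(\varepsilon)}(\cdot, Y)$ and $D_x^2 \Gamma^{(\varepsilon)}(\cdot, Y)$ on compact subsets of $\bR^{d+1} \setminus \set{Y}$, uniform in $\varepsilon$ and depending only on $d, \lambda, \Lambda$, and $\omega_{\mathbf A}^{\mathsf x}$. Arzelà–Ascoli then extracts a subsequence along which $\Gamma^{(\varepsilon_k)}$ and its spatial first and second derivatives converge locally uniformly; the limit $\Gamma$ solves $P\Gamma(\cdot, Y) = 0$ in $\bR^{d+1} \setminus \set{Y}$ and inherits \eqref{eq2232w}. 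The initial condition $\Gamma(t, \cdot, s, y) \to \delta_y$ as $t \to s^+$ is verified by testing against $\varphi \in C_c(\bR^d)$, using the corresponding identity for each $\Gamma^{(\varepsilon)}$ together with the uniform bound. The adjoint fundamental solution $\Gamma^\ast$ is constructed analogously for $P^{\ast,(\varepsilon)}$ running backward in time; since $\Gamma^{\ast,(\varepsilon)}(X,Y) = \Gamma^{(\varepsilon)}(Y,X)$ holds classically, this identity survives the passage to the limit and yields \eqref{symmetry}. Uniqueness of both $\Gamma$ and $\Gamma^\ast$ follows from a standard duality argument, testing any candidate against the other.

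The principal obstacle is the uniform Schauder-type estimate for $D_x^2 \Gamma^{(\varepsilon)}$ near (but not at) the singularity $Y$: the constants must depend only on $\omega_{\mathbf A}^{\mathsf x}$ and \emph{not} on the $C^\alpha$ norm of $\mathbf A^{(\varepsilon)}$, which blows up as $\varepsilon \to 0$. This is precisely where the paper's refined Dini mean oscillation theory, rather than classical Hölder-based Schauder estimates, becomes indispensable; without it, one could pass to the limit only in weaker topologies and lose the pointwise control on $D_x^2 \Gamma$ needed to verify $P\Gamma(\cdot,Y)=0$ classically away from the pole.
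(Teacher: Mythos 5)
The fatal step is your ``mass estimate'' $\int_{\bR^d} \Gamma^{(\varepsilon)}(\tau,z,s_0,y_0)\,dz \le C(d,\lambda,\Lambda)$, which you attribute to classical Aronson-type bounds. Aronson's Gaussian bounds are a divergence-form phenomenon; for the non-divergence operator $P$ the only integral that is universally normalized is the one over the \emph{pole} variable, $\int_{\bR^d} \Gamma^{(\varepsilon)}(t,x,s,y)\,dy = 1$ (because $u\equiv 1$ solves $Pu=0$). The integral you actually need, over the first spatial variable with the pole fixed, equals $w(s_0,y_0)$ where $w$ solves the adjoint problem $P^{\ast}w=0$ with terminal data $\equiv 1$, i.e.\ it is exactly the question of boundedness of adjoint (double-divergence) solutions. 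That quantity is \emph{not} controlled by ellipticity alone: by Il'in's example (and Fabes--Kenig, Safonov), for coefficients that are merely continuous --- hence $\mathsf{VMO_x}$, and approximable by your mollification --- the bound \eqref{eq2232w} and the corresponding mass bound fail, so no uniform-in-$\varepsilon$ constant depending only on $(d,\lambda,\Lambda)$ can exist; any valid constant must degenerate with the modulus of the mollified coefficients unless the $\mathsf{DMO_x}$ hypothesis is used quantitatively at precisely this step. Your proposal never does so, and since this pointwise/mass bound is the entire content of \eqref{eq2232w}, the argument is circular at its core. Feeding the wrong integral into the Krylov--Safonov local maximum principle does not rescue it (and, as a side remark, Krylov--Safonov is a scalar tool, whereas the construction is meant to extend to systems).

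This is exactly where the paper spends its effort: it freezes the coefficients in $x$ at the pole to get $\mathbf A_{x_0}(t)$ and the kernel $\Phi^{\ast}$, writes $\Gamma^{\ast}(\cdot,X_0)=\Phi^{\ast}(\cdot,X_0)+v$ with $P^{\ast}v=\dv^2\bigl((\mathbf A-\mathbf A_{x_0})\Phi^{\ast}(\cdot,X_0)\bigr)$, solves this by the $L_p$ theory of Lemma~\ref{lem01}, and then obtains the pointwise bound through the cutoff splitting $\mathbf g=\mathbf g_1+\mathbf g_2$ at scale $\abs{X-X_0}$, the $L_\infty$ estimate for adjoint solutions with $\mathsf{DMO_x}$ data (Lemma~\ref{lem02}), and the oscillation computation of Lemma~\ref{lem3.13}; only afterwards is $\Gamma$ built from approximate identities and the symmetry \eqref{symmetry} used to transfer the bound. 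Your remaining steps (uniform interior regularity away from the pole via the $\mathsf{DMO_x}$ theory, Arzel\`a--Ascoli, duality for symmetry and uniqueness) are reasonable in outline and close in spirit to Section~\ref{sec3.3}, but without an honest proof of the adjoint-mass/pointwise bound that uses $\omega^{\mathsf x}_{\mathbf A}$, the proposal does not prove \eqref{eq2232w}.
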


\begin{remark}
In Theorem~\ref{thm1}, in addition to \eqref{eq2232w}, we also have pointwise bounds for the derivatives of the fundamental solutions, that is,
\begin{equation*}
\abs{D_x\Gamma(X,Y)} \le C \abs{X-Y}^{-d-1},\quad
\abs{\partial_t \Gamma(X,Y)}+ \abs{D_x^2\Gamma(X,Y)} \le C \abs{X-Y}^{-d-2}
\end{equation*}
for any $X$, $Y \in \bR^{d+1}$ satisfying $0<\abs{X-Y} <R_0$.
These estimates follow directly from \eqref{eq2232w} and \cite[Theorem~3.2]{DEK21} applied to $\Gamma(\cdot, Y)$ in $Q_{R}^-(X)$ with $R=\frac12 \abs{X-Y}$.
\end{remark}

We recall that $\mathbf A$ belongs to $\mathsf{VMO_x}(\bR^{d+1})$ if and only if $\lim_{r\to 0} \omega_{\mathbf A}^{\mathsf x}(r)=0$ (see, e.g., \cite{Kr07}) and thus $\mathsf{VMO_x}(\bR^{d+1})$ contains $\mathsf{DMO_x}(\bR^{d+1})$.

\begin{theorem}[Sub-Gaussian estimate]			\label{thm2}
Assume that $\mathbf A=(a^{ij})$ satisfies \eqref{ellipticity} and belongs to $\mathsf{VMO_x}(\bR^{d+1})$.
Suppose there exists a fundamental solution $\Gamma(t,x,s,y)$ for the operator $P$, which satisfies \eqref{eq2232w}.
Then, for any $T>0$ and $\delta \in (0,1)$, there exist a constant $C=C(d, \lambda, \Lambda, \omega_{\mathbf A}^{\mathsf x}, \delta, T)$ and a universal constant $\beta>0$ such that for any $x$, $y \in \bR^{d}$ and $t$, $s \in \bR$ satisfying $0<t-s<T$ we have
\begin{equation}			\label{eq1742sun}
\abs{\Gamma(t,x,s,y)} \le \frac{C}{(t-s)^{d/2}} \exp\left\{-\beta \left(\frac{\abs{x-y}}{\sqrt{t-s}}\right)^{2-\delta}\right\}.
\end{equation}
\end{theorem}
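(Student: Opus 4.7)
The plan is to deploy a Davies-style semigroup perturbation argument. First, I would dispose of the near-diagonal regime $|x-y| \le K\sqrt{t-s}$ for a suitably large constant $K = K(\delta)$: here \eqref{eq2232w}, applied at the scale $\max(|x-y|,\sqrt{t-s}) \le C\sqrt{T}$, yields $|\Gamma(t,x,s,y)| \le C(t-s)^{-d/2}$, and since $(|x-y|/\sqrt{t-s})^{2-\delta}$ is bounded in this range, the estimate \eqref{eq1742sun} follows immediately.

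For the off-diagonal regime $|x-y| > K\sqrt{t-s}$, given $\alpha > 0$ (to be optimized later) and a unit vector $e \in \bR^d$ (eventually chosen as $e = (x-y)/|x-y|$), I would introduce the exponentially twisted kernel $\Gamma_\psi(t,x,s,y) := e^{\alpha\, e \cdot (x-y)}\Gamma(t,x,s,y)$. Writing $\psi(z) := -\alpha\, e\cdot z$, a direct calculation shows that $\Gamma_\psi(\cdot, Y)$ is the fundamental solution of the perturbed parabolic operator
\begin{equation*}
P_\psi u = \partial_t u - a^{ij}D_{ij}u - 2\alpha\, a^{ij}e_i D_j u - \alpha^2 a^{ij}e_i e_j\, u,
\end{equation*}
whose principal part is still in $\mathsf{VMO_x}$ but which now carries a drift of size $O(\alpha)$ and a bounded zeroth-order term of size $O(\alpha^2)$. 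The heart of the argument is to establish a near-diagonal bound
\begin{equation*}
|\Gamma_\psi(X,Y)| \le K(\alpha, T)\,(t-s)^{-d/2}, \quad |X-Y| \le C\sqrt{T},\ 0 < t-s < T,
\end{equation*}
with explicit and quantitative control on the $\alpha$-dependence of $K(\alpha, T)$. I would obtain this by rerunning the construction of $\Gamma$ for the operator $P_\psi$: the maximum principle absorbs the zeroth-order term into a factor $\exp(c\alpha^2 T)$, while the bounded drift is a lower-order perturbation whose interaction with the VMO-based mean-oscillation machinery can be tracked explicitly in $\alpha$. Choosing $e = (x-y)/|x-y|$ and undoing the twist then gives
\begin{equation*}
\Gamma(t,x,s,y) \le K(\alpha, T)\,(t-s)^{-d/2} \exp(-\alpha|x-y|),
\end{equation*}
and optimizing $\alpha$ against the growth of $K(\alpha, T)$ produces \eqref{eq1742sun}.

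The main obstacle is the $\alpha$-dependence of $K(\alpha, T)$. A naive application of the VMO-based $L^p$ theory yields an estimate that is too crude to produce even sub-Gaussian decay after optimization, since the drift $2\alpha\, a^{ij}e_i$ has $L^\infty$ and VMO norms scaling with $\alpha$. The point is that $\lim_{r\to 0}\omega_{\mathbf A}^{\mathsf x}(r) = 0$ by the VMO hypothesis, which allows the drift-perturbed mean-oscillation to be absorbed on sufficiently fine scales; tracking this carefully should give $\log K(\alpha, T) = o(\alpha^{2-\delta'})$ as $\alpha \to \infty$ for every $\delta' \in (0,1)$, which is exactly what is required for the Legendre-type optimization to yield the exponent $2-\delta$ in \eqref{eq1742sun}. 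The loss from the full Gaussian exponent $2$ down to $2-\delta$ is precisely the non-scale-invariant interaction between a linearly growing drift and the VMO modulus, and is expected to disappear under the stronger Dini-continuity assumption treated in the sequel.
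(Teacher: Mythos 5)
Your twisting step is fine as formal algebra: $\Gamma_\psi(t,x,s,y)=e^{\alpha e\cdot(x-y)}\Gamma(t,x,s,y)$ is indeed the kernel of the perturbed operator $P_\psi$. But the heart of your argument --- the near-diagonal bound $\abs{\Gamma_\psi(X,Y)}\le K(\alpha,T)(t-s)^{-d/2}$ with $\log K(\alpha,T)=o(\alpha^{2-\delta'})$ --- is exactly the nontrivial content, and you give no mechanism for it. Under the hypotheses of Theorem~\ref{thm2} you are handed only $\mathbf A\in\mathsf{VMO_x}$ together with the bound \eqref{eq2232w} \emph{for the operator $P$ itself}; there is no construction or pointwise estimate available for $P_\psi$. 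The on-diagonal bound \eqref{eq2232w} in this paper is not a soft consequence of $L_p$ theory or the maximum principle: it is produced in Section~\ref{sec3} by a duality argument through the adjoint problem, using Dini mean oscillation (not just VMO) and Lemma~\ref{lem02}, and "rerunning the construction" for $P_\psi$ with a drift of size $\alpha$ and zeroth-order term of size $\alpha^2$ while tracking the $\alpha$-dependence of every constant is not something you can wave through --- in the non-divergence setting there is no Nash/Davies energy machinery to give the twisted on-diagonal bound with the factor $e^{c\alpha^2 T}$ that makes this method work for divergence-form operators. The remark that the zeroth-order term is "absorbed by the maximum principle" controls sup-norms of bounded solutions, not the $(t-s)^{-d/2}$ singularity of the twisted kernel. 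As written, the claimed growth $\log K(\alpha,T)=o(\alpha^{2-\delta'})$ is an unproved hope, so the Legendre-type optimization has nothing to optimize.

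For comparison, the paper's proof deliberately avoids perturbing the operator. It first proves a fixed-rate exponential decay $\abs{\Gamma(s+\varepsilon^2,x,s,y)}\le C_0\varepsilon^{-d}e^{-\kappa_0\abs{x-y}/\varepsilon}$ by writing $v=\Gamma(\cdot,\cdot,0,0)e^{-\mu t}$, applying the $W^{1,2}_p$ estimate with a large parameter $\mu$ (whose gain $\mu^{-1/2}$ beats the constant $N_0$), and iterating over the annuli $B_{k+1}\setminus B_k$; this uses only \eqref{eq2232w} and the VMO-based $L_p$ theory, with no $\alpha$-dependent constants to track. It then self-improves this exponential decay to the sub-Gaussian bound by iterating the semigroup identity \eqref{eq1549th} over $N^2$ equal time steps and choosing $N\simeq(\abs{x-y}/\sqrt{t-s})^{1-\delta}$, so that the combinatorial factors $C_0^{N^2}$ and the shell-counting terms are dominated by half of the exponential gain. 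If you want to salvage your route, you would need to actually prove the quantitative twisted on-diagonal estimate for $P_\psi$ under VMO$_{\mathsf x}$ alone, which is not supplied by anything in the paper and is likely at least as hard as the theorem itself.
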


We shall say that $\mathbf A$ is uniformly Dini continuous in $x$ over $\bR^{d+1}$ if its modulus of continuity in $x$ defined by
\begin{equation*}
\varrho_{\mathbf A}^{\mathsf x}(r):=
\sup\,\Set{\abs{\mathbf A(t,x)-\mathbf A(t,y)}: x, y \in \bR^d,\,  t\in \bR,\, \abs{x-y}\le r\,}
\end{equation*}
satisfies the Dini condition
\[
\int_0^1 \frac{\varrho_{\mathbf A}^{\textsf x}(r)}{r}\,dr <+\infty.
\]
It is clear the if $\mathbf A$ is uniformly Dini continuous in $x$ over $\bR^{d+1}$, then it is of Dini mean oscillation in $x$ over $\bR^{d+1}$.

\begin{theorem}[Gaussian estimate]			\label{thm3}
Assume that $\mathbf A=(a^{ij})$ satisfies \eqref{ellipticity} and $\mathbf A=(a^{ij})$ is uniformly Dini continuous in $x$ over $\bR^{d+1}$.
Then the fundamental solution satisfies the Gaussian bounds, that is, for any $T>0$, there exists $C=C(d,\lambda, \Lambda, T, \varrho_{\mathbf A}^{\mathsf x})$ and $\kappa=\kappa(\lambda, \Lambda)$ such that for any $x$, $y \in \bR^{d}$ and $t$, $s \in \bR$ satisfying $0<t-s<T$ we have
\begin{equation}			\label{eq1743sun}
\abs{\Gamma(t,x,s,y)} \le \frac{C}{(t-s)^{d/2}} \exp\left\{-\kappa \frac{\abs{x-y}^2}{t-s}\right\}.
\end{equation}
\end{theorem}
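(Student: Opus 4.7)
The plan is to upgrade the pointwise bound of Theorem~\ref{thm1} — which implies the on-diagonal estimate $\sup_x \Gamma(t,x,s,y) \le C(t-s)^{-d/2}$ for $0<t-s<T$ — to a Gaussian bound via Davies' exponential perturbation method, using the Dini continuity assumption to establish a diagonal bound for a suitably perturbed fundamental solution that is uniform in the perturbation parameter.

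For fixed $(s,y)\in\bR^{d+1}$ and $\alpha\in\bR^d$, I would introduce the weighted function $K_\alpha(t,x):=e^{\alpha\cdot(x-y)}\Gamma(t,x,s,y)$, defined for $t>s$. A direct computation shows that, as a function of $(t,x)$, $K_\alpha$ is the fundamental solution with pole at $(s,y)$ of the perturbed operator
\[
\tilde P_\alpha := \partial_t - a^{ij}D_{ij} + 2a^{ij}\alpha_i D_j - a^{ij}\alpha_i\alpha_j,
\]
whose principal part is identical to that of $P$ (same ellipticity constants and Dini modulus in $x$), with only uniformly bounded first- and zero-order coefficients added.

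The heart of the proof is to establish the diagonal bound
\[
K_\alpha(t,x) \le C(t-s)^{-d/2}\exp\bigl(\Lambda|\alpha|^2(t-s)\bigr),\qquad 0<t-s<T,\; x\in\bR^d,
\]
with $C=C(d,\lambda,\Lambda,T,\varrho_{\mathbf A}^{\mathsf x})$ \emph{independent of} $\alpha$. Setting $\widetilde K:=e^{-\Lambda|\alpha|^2(t-s)}K_\alpha$ produces a parabolic operator whose zero-order coefficient is nonpositive; $\widetilde K$ is then a nonnegative fundamental solution of a parabolic operator with the same Dini continuous principal part as $P$ and a drift of size $O(|\alpha|)$. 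The parabolic rescaling $\hat t=|\alpha|^2 t$, $\hat x=|\alpha|x$ (with $\hat K=|\alpha|^{-d}\widetilde K$) normalizes the drift to be uniformly bounded by $2\Lambda$ and replaces $\varrho_{\mathbf A}^{\mathsf x}(r)$ by $\varrho_{\mathbf A}^{\mathsf x}(r/|\alpha|)$, whose Dini integral is at most that of $\varrho_{\mathbf A}^{\mathsf x}$ for $|\alpha|\ge 1$. An adaptation of the method of Theorem~\ref{thm1} to include uniformly bounded lower-order terms — which is standard since such terms are perturbative on short parabolic cylinders — then gives $\hat K\le C(\hat t-\hat s)^{-d/2}$ uniformly in $\alpha$, and undoing the rescaling yields the claim. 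For $|\alpha|<1$ the Gaussian factor is bounded below by a constant depending on $T$ and the required bound follows directly from Theorem~\ref{thm1}.

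Once the diagonal bound is available, the Gaussian estimate follows by optimization. From $\Gamma = e^{-\alpha\cdot(x-y)}K_\alpha$ one obtains
\[
\Gamma(t,x,s,y) \le C(t-s)^{-d/2}\exp\bigl(\Lambda|\alpha|^2(t-s)-\alpha\cdot(x-y)\bigr),
\]
and the choice $\alpha=(x-y)/(2\Lambda(t-s))$ reduces the exponent to $-|x-y|^2/(4\Lambda(t-s))$, giving \eqref{eq1743sun} with $\kappa=1/(4\Lambda)$, which depends only on $\Lambda$. The main obstacle is the diagonal bound uniformly in $\alpha$: one must verify that after the adaptation of Theorem~\ref{thm1} to include bounded lower-order coefficients and after parabolic rescaling, the constant depends on the Dini modulus of $\mathbf A$ only through its Dini integral, not through pointwise values. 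This uniformity, the essential technical point, is the reason the proof requires Dini continuity rather than mere $\mathsf{DMO_x}$.
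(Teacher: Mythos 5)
Your reduction to a uniform-in-$\alpha$ on-diagonal bound for the twisted kernel $K_\alpha=e^{\alpha\cdot(x-y)}\Gamma$ contains the genuine gap, and it is exactly the point you defer to ``an adaptation of the method of Theorem~\ref{thm1}.'' After your parabolic rescaling $\hat t=\abs{\alpha}^2t$, $\hat x=\abs{\alpha}x$, the bound $\hat K\le C(\hat t-\hat s)^{-d/2}$ is needed on the time range $0<\hat t-\hat s<\abs{\alpha}^2T$, which is unbounded as $\abs{\alpha}\to\infty$ (indeed, with your optimal choice $\alpha=(x-y)/(2\Lambda(t-s))$ one has $\abs{\alpha}^2(t-s)=\abs{x-y}^2/(4\Lambda^2(t-s))$, arbitrarily large). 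The rescaling does normalize the drift and shrinks the modulus to $\varrho_{\mathbf A}^{\mathsf x}(r/\abs{\alpha})$, but every constant produced by the machinery of Theorem~\ref{thm1} (and of Theorem~\ref{thm2}) depends on the length of the time interval: the $L_p$ estimates of Lemma~\ref{lem01} are on $(t_0,t_1)\times\bR^d$ with $C=C(t_0,t_1)$, and the pointwise bound \eqref{eq2232w} holds only for $\abs{X-Y}<R_0$ with $C=C(R_0)$. So the asserted uniformity in $\alpha$ is not delivered by the tools you invoke; moreover the bound $K_\alpha\le C(t-s)^{-d/2}e^{c\abs{\alpha}^2(t-s)}$ uniformly in $\alpha$ is, by the very optimization you perform at the end, equivalent to the Gaussian estimate itself, so some new mechanism must produce it. For divergence-form operators that mechanism is the $L^2$ energy structure behind Davies' method (the quadratic form absorbs the $e^{\alpha\cdot x}$ conjugation); for non-divergence operators there is no such structure, and in addition the rescaled estimate is needed for all $\hat x$, not only for $\abs{\hat x-\hat y}$ bounded, so one would also have to redo the sub-Gaussian iteration of Section~\ref{sec4} for operators with drift.

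There is a conceivable repair in the scalar case: for the conjugated operator with nonnegative zero-order coefficient the maximum principle gives the mass bound $\int \hat K(\hat t,\hat x,\hat\tau,\hat\xi)\,d\hat\xi\le 1$, and Chapman--Kolmogorov then controls $\sup_{\hat x}\hat K$ on long rescaled time intervals by its value at $\hat t-\hat s=1$; the resulting loss is only polynomial in $\abs{\alpha}$ and is absorbed by the Gaussian factor. But this is absent from your proposal, it still requires extending Theorems~\ref{thm1} and \ref{thm2} to operators with bounded lower-order coefficients with constants uniform over that class, and it leans on the maximum principle, i.e., it cannot serve the systems setting the paper is aiming at. The paper proceeds quite differently: it writes $\Gamma(t,x,\tau,\xi)=\Phi^{\xi}(t,x,\tau,\xi)+v$ with $\Phi^{\xi}$ the kernel of the operator with coefficients frozen at $x=\xi$ (depending only on $t$), derives the integral identity \eqref{eq1616th} for $v$, shows that the Levi-type parametrix series \eqref{eq2100th} converges with Gaussian bounds on a short time interval because $\int_0^{\delta_0}\varrho_{\mathbf A}^{\mathsf x}(s)s^{-1}\,ds$ can be made small, identifies $v$ with the series by a contraction argument in $L_1$, and then iterates in time via \eqref{eq1549th}; Dini continuity enters only through that smallness, and no maximum principle or uniformity in an auxiliary parameter is needed.
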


A few remarks are in order.
The fundamental solutions are topics in many classical books. See, e.g., \cite{Eidelman, Friedman, IO1962, LSU} and references therein.
It is well known that the fundamental solutions of second order parabolic equations in divergence form have two-sided Gaussian bounds even in the case when the coefficients are just bounded and measurable; see \cite{Aronson67}.
In contrast to parabolic equations in divergence form, the fundamental solutions of parabolic equations in non-divergence form do not necessarily have the Gaussian bounds if the coefficients do not possess some kind of regularity, although certain pointwise bounds are available in terms of so-called normalized adjoint solutions; see \cite{Esc2000}.
As a matter of fact, even if the coefficients are continuous in $t$ and $x$, the following weaker estimate may not hold:
\[
\abs{\Gamma(t,x,s,y)} \le C(t-s)^{-d/2}\quad \text{for }\,0<t-s<T.
\]
A counterexample is given in \cite{Ilin62} for the equation (in one space variable) $\partial u/\partial t = a(t,x) \partial^2 u/\partial x^2$ with a coefficient $a(t,x)$, continuous in $t$ and $x$, and satisfying  $1/2 \le a(t,x) \le 3/2$, whose fundamental solution is unbounded at any given point $x_0$ for any $t>0$. See also \cite{FK81} and \cite{Sa81} for examples of equations with continuous coefficients, whose fundamental solutions (as measure) are singular with respect to Lebesgue measure for any $t > 0$.
On the other hand, if the coefficients are of Dini mean oscillation in $x$, then the fundamental solutions have the usual Gaussian bounds; see \cite{DEK21}.
However, the proof there relies heavily on the Harnack type properties of nonnegative (adjoint) solutions and is not applicable to the systems setting.
To the best of our knowledge, the Gaussian bounds for the fundamental solutions were available to the non-scalar setting if the coefficients are continuous in $t$ and $x$, and additionally, if they are doubly Dini continuous in $x$. See \cite{PE84} and \cite{Eidelman}.

We give a brief description of the methods we use in the proofs. To show Theorem \ref{thm1}, we adapt an argument in \cite{KL21} for non-divergence form elliptic equations, by using the pointwise estimates of solutions established in \cite{DEK21}. In the proof of Theorem \ref{thm2}, we first establish an exponential decay estimate by using the $W^{1,2}_p$ estimate and an iteration argument. We then improve the exponential decay estimate to the sub-Gaussian estimate \eqref{eq1742sun} by exploiting  the semi-group property of the fundamental solution together with a delicate re-scaling argument. Finally, we modify the parametrix method of Levi \cite{Levi} to prove the Gaussian estimate \eqref{eq1743sun} in Theorem \ref{thm3}.
The main difference between Levi's original method and ours is that Levi's procedure was intended to construct the fundamental solution and thus required more restriction on the coefficients while in our approach, we construct the fundamental solution by different means and prove that it is identical with the resulting kernel produced by our modified parametrix method, which inherits the Gaussian bounds from the fundamental solutions of parabolic operators with coefficients depending only on $t$. It is also worth mentioning that in contrast to the scalar case, we are only able to get a one-sided Gaussian estimate.

Finally, the organization of the paper is as follows.
In Section~\ref{sec2}, we state some preliminary definition and lemmas.
The proofs of Theorems~\ref{thm1}, \ref{thm2}, and \ref{thm3} are given in Sections~\ref{sec3}, \ref{sec4}, and \ref{sec5}, respectively.

\section{Preliminaries}		\label{sec2}

For any domain $Q \subset \bR^{d+1}$ and $p\in [1,\infty]$, we shall denote by
$L_p(Q)$ the standard Lebesgue class.
We define the function space
\begin{equation*}
W_{p}^{1,2}(Q)=
\set{u:\,u, \, \partial_t u,\, Du,\, D^2u\in L_p(Q)},
\end{equation*}
which are equipped with norm
\begin{equation*}
\norm{u}_{W_{p}^{1,2}(Q)}=\norm{u}_{L_p(Q)}+\norm{Du}_{L_p(Q)}+\norm{D^2u}_{L_p(Q)}+\norm{\partial_t u}_{L_p(Q)}.
\end{equation*}

We deal with the adjoint problem
\begin{equation}\label{master-adj-prob}
P^\ast u=\dv^2\mathbf g +f\; \text{ in }\; (t_0,t_1)\times \bR^d, \quad u(t_1,\cdot)=0\; \text{on}\;  \bR^d,
\end{equation}
where $\mathbf g = (g^{kl})$ is a symmetric $d \times d$ matrix-valued function and $\dv^2\mathbf g=D_{kl}g^{kl}$.

\begin{definition}
Assume that  $\mathbf{g} \in L_p((t_0,t_1)\times \bR^d)$ and $f \in L_p((t_0,t_1)\times \bR^d)$, where $1<p<\infty$.
We say that $u \in L_p((t_0,t_1)\times \bR^d)$ is a solution to \eqref{master-adj-prob} if $u$ satisfies
\begin{equation}			\label{eq13.53pde}
\int_{t_0}^{t_1} \!\!\!\int_{\bR^d} u\, P v=\int_{t_0}^{t_1} \!\!\!\int_{\bR^d} f v+\tr(\mathbf{g}\,D^2 v)
\end{equation}
for any $v \in W^{1,2}_{p'}((t_0,t_1)\times \bR^d)$ satisfying $u(t_0,\cdot)=0$, where $\frac{1}{p}+\frac{1}{p'}=1$.
\end{definition}

\begin{lemma}				\label{lem01}
Assume that  $\mathbf{g} \in L_p((t_0,t_1)\times \bR^d)$ and $f \in L_p((t_0,t_1)\times \bR^d)$, where $1<p<\infty$.
Then there exists a unique solution $v$ of the adjoint problem \eqref{master-adj-prob} in $L_p((t_0,t_1)\times \bR^d)$ and it satisfies
\[
\norm{v}_{L_p((t_0,t_1)\times \bR^d)}  \le C \left(\norm{\mathbf g}_{L_p((t_0,t_1)\times \bR^d)}+\norm{f}_{L_p((t_0,t_1)\times \bR^d)}\right),
\]
where $C$ is a constant depending only on  $d, \lambda, \Lambda, p, t_0, t_1$, and $\omega^{\mathsf x}_{\mathbf A}$.
\end{lemma}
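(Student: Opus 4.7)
The plan is a standard duality argument built on the $L_{p'}$-solvability of the forward Cauchy problem for $P$. Given any $h \in L_{p'}((t_0,t_1)\times\bR^d)$, I first solve
\begin{equation*}
Pv=h\;\text{ in }\;(t_0,t_1)\times\bR^d,\quad v(t_0,\cdot)=0.
\end{equation*}
Since $\mathbf A$ is of Dini mean oscillation in $x$, it belongs in particular to $\mathsf{VMO}_{\mathsf x}(\bR^{d+1})$, and Krylov's $W^{1,2}_{p'}$-theory for non-divergence form parabolic operators with $\mathsf{VMO}_{\mathsf x}$ coefficients gives a unique $v\in W^{1,2}_{p'}((t_0,t_1)\times\bR^d)$ with the a priori estimate
\begin{equation*}
\norm{v}_{W^{1,2}_{p'}((t_0,t_1)\times\bR^d)}\le C\norm{h}_{L_{p'}((t_0,t_1)\times\bR^d)},
\end{equation*}
where $C=C(d,\lambda,\Lambda,p,t_0,t_1,\omega^{\mathsf x}_{\mathbf A})$. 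This is the only nontrivial input.

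Define the linear functional $\Phi\colon L_{p'}\to\bR$ by
\begin{equation*}
\Phi(h):=\int_{t_0}^{t_1}\!\!\int_{\bR^d} fv+\tr(\mathbf g\,D^2v),
\end{equation*}
where $v$ is the solution produced above. H\"older's inequality combined with the a priori estimate yields
\begin{equation*}
\abs{\Phi(h)}\le C\bigl(\norm{f}_{L_p}+\norm{\mathbf g}_{L_p}\bigr)\norm{h}_{L_{p'}},
\end{equation*}
so $\Phi$ is a bounded linear functional on $L_{p'}((t_0,t_1)\times\bR^d)$. By the Riesz representation theorem, there exists a unique $u\in L_p((t_0,t_1)\times\bR^d)$ such that $\Phi(h)=\int uh$ for every $h\in L_{p'}$, and $u$ satisfies the claimed bound. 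To verify that this $u$ satisfies the definition, pick any test function $v\in W^{1,2}_{p'}((t_0,t_1)\times\bR^d)$ with $v(t_0,\cdot)=0$, and set $h:=Pv\in L_{p'}$. The uniqueness part of Krylov's theorem implies that this $v$ agrees with the solution associated to $h$ in the definition of $\Phi$, whence \eqref{eq13.53pde} becomes $\int u\,Pv=\Phi(Pv)$, which is precisely the required identity.

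For uniqueness, suppose $u_1,u_2\in L_p$ both solve \eqref{master-adj-prob}; then $u:=u_1-u_2$ satisfies $\int u\,Pv=0$ for every admissible test $v$. Given an arbitrary $h\in L_{p'}$, choose $v$ to be the unique $W^{1,2}_{p'}$ Cauchy-problem solution of $Pv=h$, $v(t_0,\cdot)=0$; substituting gives $\int uh=0$ for all $h\in L_{p'}$, so $u\equiv0$. The main (and essentially only) obstacle is thus the $W^{1,2}_{p'}$-solvability result for the forward Cauchy problem under the $\mathsf{VMO}_{\mathsf x}$ hypothesis on $\mathbf A$; the rest of the proof is purely functional-analytic duality. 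The dependence of $C$ on $\omega^{\mathsf x}_{\mathbf A}$ is inherited directly from Krylov's estimate.
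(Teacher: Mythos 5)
Your proof is correct and takes essentially the same route as the paper: a transposition (duality) argument built on Krylov's $W^{1,2}_{p'}$ solvability and a priori estimate for the forward Cauchy problem with $\mathsf{VMO}_{\mathsf x}$ coefficients, followed by Riesz representation on $L_{p'}$. You have simply written out in full the steps that the paper compresses into the phrase ``simple to derive by transposition.''
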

\begin{proof}
Recall that $\mathsf{DMO_x}(\bR^{d+1}) \subset \mathsf{VMO_x}(\bR^{d+1})$.
The existence and uniqueness of the solution to \eqref{master-adj-prob} is simple to derive by transposition from the unique existence of a solution $v \in W^{1,2}_{p'}((t_0,t_1)\times \bR^d)$ to the direct problem
\[
P v = g\;\text{ in }\; (t_0,t_1)\times \bR^d,\quad
v(t_0,\cdot) = 0 \;\text{ in }\; \bR^d,
\]
and the corresponding $L_{p'}$ estimates:
\begin{equation}			\label{eq1825w}
\norm{D^2v}_{L_{p'}((t_0,t_1)\times \bR^d)}+\norm{\partial_t v}_{L_{p'}((t_0,t_1)\times \bR^d)}+ \norm{v}_{L_{p'}((t_0,t_1)\times \bR^d)}\le C \norm{g}_{L_{p'}((t_0,t_1)\times \bR^d)},
\end{equation}
where $C=C(d, \lambda, \Lambda, t_0, t_1, \omega_{\mathbf A}^{\mathsf x})$.
See \cite{Kr07} and \cite{EM2016}.
\end{proof}

\begin{lemma}					\label{lem02}
Let $R_0>0$ and $\mathbf{g}=(g^{ij}) \in \mathsf{DMO_x}(Q^+_{R_0}(X_0))$.
Suppose $v$ is an $L_2$ solution of
\[
P^\ast u=\dv^2\mathbf{g}\quad \text{in }\; Q^+_{2r}(X_0),
\]
where $0<r \le \frac{1}{2}R_0$.
Then we have
\begin{equation*}
\norm{u}_{L_\infty(Q^+_r(X_0))}\le  C \left( \fint_{Q^+_{2r}(X_0)}\abs{u}+\int_0^r \frac{\omega^{\mathsf x}_\mathbf{g}(\tau, Q^+_{2r}(X_0))}{\tau}\,d\tau \right),
\end{equation*}
where $C=C(d,\lambda,\Lambda,\omega^{\mathsf x}_\mathbf{A},R_0)$.
\end{lemma}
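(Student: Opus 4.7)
The plan is to establish the pointwise bound at each point of $Q_r^+(X_0)$ via a Campanato-type iteration on shrinking parabolic half-cylinders, working in the Dini-mean-oscillation framework developed in \cite{DEK21} and adapted here to the adjoint equation. The key algebraic observation is that if $\bar{\mathbf g}_\rho(s):=\fint_{B_\rho(x_1)}\mathbf g(s,y)\,dy$ denotes the spatial average, then $\bar{\mathbf g}_\rho$ depends only on $t$, so $\dv^2\bar{\mathbf g}_\rho\equiv 0$; one may therefore subtract such averages from the right-hand side of the adjoint equation without altering the PDE, at the cost of a perturbation controlled by $\omega^{\mathsf x}_{\mathbf g}$.

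Fix $X_1=(t_1,x_1)\in Q_r^+(X_0)$ and, for each scale $\rho\in(0,r]$ with $Q_\rho^+(X_1)\subset Q_{2r}^+(X_0)$, decompose $u=u_1+u_2$ on $Q_\rho^+(X_1)$, where $u_1$ solves $P^\ast u_1=\dv^2(\mathbf g-\bar{\mathbf g}_\rho)$ in $Q_\rho^+(X_1)$ with zero terminal and lateral data, and $u_2:=u-u_1$ solves the homogeneous adjoint equation $P^\ast u_2=0$ in $Q_\rho^+(X_1)$. A cylindrical counterpart of Lemma~\ref{lem01}, together with H\"older's inequality and a standard John--Nirenberg-type self-improvement of $\mathbf g-\bar{\mathbf g}_\rho$ from $L_1$ to $L_p$, yields the perturbative bound
\[
\fint_{Q_\rho^+(X_1)}\abs{u_1}\le C\,\omega^{\mathsf x}_{\mathbf g}(\rho, Q_{2r}^+(X_0)).
\]
For $u_2$, the DMO Campanato decay estimate of \cite{DEK21} applied to homogeneous $L_2$ adjoint solutions gives, for some fixed $\theta\in(0,1/2)$ and $\alpha\in(0,1)$ and suitable constants $c_\rho$,
\[
\fint_{Q_{\theta\rho}^+(X_1)}\abs{u_2-c_{\theta\rho}}\le\theta^\alpha\fint_{Q_\rho^+(X_1)}\abs{u_2-c_\rho}.
\]

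Setting $\phi(\rho):=\fint_{Q_\rho^+(X_1)}\abs{u-c_\rho}$, these two estimates combine to yield the recursion $\phi(\theta\rho)\le\theta^\alpha\phi(\rho)+C\,\omega^{\mathsf x}_{\mathbf g}(\rho, Q_{2r}^+(X_0))$. Iterating over $\rho_k=\theta^k r$ and summing against the Dini-integrable tail gives convergence of $c_{\rho_k}$ to a limit that serves as the value of $u$ at every Lebesgue point $X_1$, with the quantitative bound
\[
\abs{u(X_1)}\le C\fint_{Q_{2r}^+(X_0)}\abs{u}+C\int_0^r\frac{\omega^{\mathsf x}_{\mathbf g}(\tau, Q_{2r}^+(X_0))}{\tau}\,d\tau,
\]
uniform in $X_1\in Q_r^+(X_0)$; the essential supremum then delivers the $L_\infty$ estimate. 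The main obstacle will be the Campanato decay for $u_2$ on the \emph{half}-cylinder $Q_\rho^+$ rather than the full parabolic cylinder $Q_\rho$: one must adapt the DMO regularity argument of \cite{DEK21} to respect the terminal slice where $u_1$ vanishes, so that the iteration can be carried out at points $X_1$ close to the initial time of $Q_{2r}^+(X_0)$. A secondary subtlety is obtaining the correct power of $\omega^{\mathsf x}_{\mathbf g}$ in the bound on $u_1$, which requires the $L_p$-BMO self-improvement mentioned above rather than a naive $L_1 \to L_p$ interpolation.
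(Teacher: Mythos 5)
Your overall scheme (freeze the data, split $u=u_1+u_2$, bound the perturbative part by $\omega^{\mathsf x}_{\mathbf g}$, get geometric decay for the remainder, iterate and sum the Dini series) is indeed the general strategy behind this kind of estimate; the paper itself does not reprove it but simply invokes \cite[Theorem~3.3]{DEK21} (cf.\ the appendix of \cite{KL21}) for essentially this statement. However, your argument has a genuine gap at its core step: the asserted Campanato decay $\fint_{Q_{\theta\rho}^+}\abs{u_2-c_{\theta\rho}}\le\theta^\alpha\fint_{Q_\rho^+}\abs{u_2-c_\rho}$ for solutions of the homogeneous adjoint equation $P^\ast u_2=0$ is not an available (nor a correct) off-the-shelf estimate when the coefficients $\mathbf A$ are merely $\mathsf{DMO_x}$. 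The adjoint equation is in double divergence form, so to compare $u_2$ with a solution of a good equation one must freeze the coefficients in $x$ and move $(\mathbf A-\bar{\mathbf A}^{\mathsf x}_{\rho})\,u_2$ to the right-hand side as an extra $\dv^2$ source; the resulting decay inequality necessarily carries an additional term of the form $C\,\omega_{\mathbf A}^{\mathsf x}(\rho)\,\norm{u}_{L_\infty}$ (or an averaged version of it), and the substantive work is to absorb this term using the smallness of $\int_0^\rho \omega_{\mathbf A}^{\mathsf x}(t)\,t^{-1}dt$ at small scales together with a covering/interpolation step. This is precisely the content of \cite[Theorem~3.3]{DEK21}, and it is also the reason the constant in the lemma depends on $\omega^{\mathsf x}_{\mathbf A}$ -- a dependence your clean decay inequality could never produce. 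If, on the other hand, you intend to quote \cite{DEK21} for that decay, then you are in effect quoting the theorem being proved, and the only remaining content is the adaptation from the cylinders used there to half-cylinders $Q^+_\rho$ abutting the terminal time slice; you correctly flag this as the main obstacle, but you do not carry it out (the paper points to the appendix of \cite{KL21} for the analogous adjustment in the elliptic case).

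Two smaller points. First, your construction of $u_1$ as a solution with ``zero terminal and lateral data'' on $Q^+_\rho(X_1)$ requires a solvability and $L_p$ estimate for the adjoint problem on bounded cylinders with scale-invariant constant, which is not established in the paper; the standard way around this is to extend $(\mathbf g-\bar{\mathbf g}^{\mathsf x}_{\rho})\chi_{Q^+_\rho(X_1)}$ by zero and solve in a strip via Lemma~\ref{lem01}, noting that $\dv^2\bar{\mathbf g}^{\mathsf x}_{\rho}=0$ so that $u-u_1$ is still adjoint-caloric in $Q^+_\rho(X_1)$. Second, your appeal to a John--Nirenberg type self-improvement of the $L_1$ mean oscillation to $L_p$ is the right ingredient for the bound on $u_1$, but it should be stated carefully (one also needs the quasi-monotonicity $\omega(t)\simeq\omega(s)$ for $t\simeq s$, as used elsewhere in the paper) rather than invoked as ``standard.''
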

\begin{proof}
The proof is essentially given in \cite[Theorem 3.3]{DEK21}.
cf. \cite[Appendix]{KL21}.
\end{proof}

\section{Proof of Theorem~\ref{thm1}}			\label{sec3}
By adapting the argument in \cite{KL21}, we shall first construct the fundamental solution $\Gamma^\ast(Y,X)=\Gamma^\ast(s,y,t,x)$ for the adjoint operator $P^*$ in Section~\ref{sec3.1}.
We then establish in Section~\ref{sec3.2} that
\[
\abs{\Gamma^\ast(Y,X)} \le C \abs{X-Y}^{-d}\quad\text{for all $X$, $Y$ satisfying $0<\abs{X-Y} <R_0$.}
\]
In Section~\ref{sec3.3}, we construct the fundamental solution $\Gamma(t,x,s,y)$ of the operator $P$ and show the symmetry relation \eqref{symmetry}, which in particular implies \eqref{eq2232w}.
\subsection{Construction of the adjoint fundamental solution}	\label{sec3.1}
Fix a point $X_0=(t_0,x_0)$ in $\bR^{d+1}$.
We construct fundamental solution $\Gamma^\ast(\cdot, X_0)=\Gamma^\ast(\cdot,\cdot, t_0, x_0)$ for the adjoint operator $P^\ast$ with a pole at $X_0=(t_0, x_0)$.
\begin{lemma}	         
For any $r>0$, $\Set{\bar{\mathbf A}^{\mathsf x}_{x_0,2^{-k}r}(\cdot)}_{k=0}^\infty$ converges in $L_1((t_0-r^2,t_0))$ to a function $\mathbf A_{x_0}(\cdot)$, which is symmetric and satisfies \eqref{ellipticity}. Moreover,
\begin{equation}
                    \label{eq12.32}
\fint_{Q_r^-(X_0)} \abs{\mathbf A-\mathbf A_{x_0}} \le c(d) \int_0^r \frac{\omega_{\mathbf A}^{\mathsf x}(s)}{s}\,ds.
\end{equation}
\end{lemma}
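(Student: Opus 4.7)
The plan is to establish $L_1((t_0 - r^2, t_0))$ Cauchyness of the sequence $\{\bar{\mathbf A}^{\mathsf x}_{x_0, 2^{-k}r}\}_k$ by a quantitative bound in $\omega^{\mathsf x}_{\mathbf A}$, and then read off symmetry, ellipticity, and~\eqref{eq12.32} from this.

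First I would record the pointwise bound
\[
\bigabs{\bar{\mathbf A}^{\mathsf x}_{x_0, 2^{-k-1}r}(s) - \bar{\mathbf A}^{\mathsf x}_{x_0, 2^{-k}r}(s)} \le 2^d \fint_{B_{2^{-k}r}(x_0)} \bigabs{\mathbf A(s, y) - \bar{\mathbf A}^{\mathsf x}_{x_0, 2^{-k}r}(s)}\, dy,
\]
which follows from the triangle inequality (the factor $2^d$ compares the volumes of $B_{2^{-k}r}$ and $B_{2^{-k-1}r}$). Integrating in $s$ over $(t_0 - r^2, t_0)$ and relating the result to $\omega^{\mathsf x}_{\mathbf A}$ requires bridging a mismatch: the intrinsic time-scale of $\omega^{\mathsf x}_{\mathbf A}(2^{-k}r, \cdot)$ is $4^{-k}r^2$, much shorter than $r^2$. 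I would resolve this by tiling $(t_0 - r^2, t_0)$ into $4^k$ consecutive sub-intervals $I_j$ of length $4^{-k}r^2$ and setting $X_j = (t_j^+, x_0)$ for $t_j^+$ the right endpoint of $I_j$, so that $I_j \times B_{2^{-k}r}(x_0) = Q_{2^{-k}r}^-(X_j)$. Then
\[
\int_{I_j} \fint_{B_{2^{-k}r}(x_0)} \abs{\mathbf A - \bar{\mathbf A}^{\mathsf x}_{x_0, 2^{-k}r}}\, dyds = |I_j|\,\omega^{\mathsf x}_{\mathbf A}(2^{-k}r, X_j) \le 4^{-k}r^2\,\omega^{\mathsf x}_{\mathbf A}(2^{-k}r),
\]
and summing in $j$ yields $\int_{t_0-r^2}^{t_0} \abs{\bar{\mathbf A}^{\mathsf x}_{x_0, 2^{-k-1}r} - \bar{\mathbf A}^{\mathsf x}_{x_0, 2^{-k}r}}\,ds \le 2^d r^2\,\omega^{\mathsf x}_{\mathbf A}(2^{-k}r)$.

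Summing in $k$ and invoking the Dini hypothesis, which after passing to the non-decreasing majorant of $\omega^{\mathsf x}_{\mathbf A}$ yields $\sum_{k\ge 0} \omega^{\mathsf x}_{\mathbf A}(2^{-k}r) \le c(d) \int_0^r \omega^{\mathsf x}_{\mathbf A}(\tau)/\tau\, d\tau$, the telescoping partial sums $\bar{\mathbf A}^{\mathsf x}_{x_0, 2^{-k}r}$ converge absolutely in $L_1((t_0-r^2, t_0))$ to a limit that I call $\mathbf A_{x_0}$. Each $\bar{\mathbf A}^{\mathsf x}_{x_0, 2^{-k}r}(s)$ is an average of symmetric matrices obeying~\eqref{ellipticity}, hence is itself symmetric and elliptic; after extracting a pointwise a.e.\ convergent subsequence and adjusting on a null set, $\mathbf A_{x_0}$ inherits both properties.

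Finally, for~\eqref{eq12.32} I would split by the triangle inequality,
\[
\fint_{Q_r^-(X_0)} \abs{\mathbf A - \mathbf A_{x_0}} \le \omega^{\mathsf x}_{\mathbf A}(r, X_0) + \fint_{t_0-r^2}^{t_0} \abs{\bar{\mathbf A}^{\mathsf x}_{x_0, r} - \mathbf A_{x_0}}\, ds,
\]
bounding the first term by $\omega^{\mathsf x}_{\mathbf A}(r)$ and the second by $2^d \sum_{k\ge 0}\omega^{\mathsf x}_{\mathbf A}(2^{-k}r)$ via the same telescope, then converting both to the Dini integral. The one genuinely non-routine step is the tiling argument that bridges the fixed time interval of length $r^2$ to the shorter parabolic scales intrinsic to $\omega^{\mathsf x}_{\mathbf A}$; the rest, including the dyadic-to-integral comparison and the subsequence extraction, is standard.
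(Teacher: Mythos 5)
Your argument follows the paper's proof almost step for step --- the $2^d$ averaging bound, the tiling of $(t_0-r^2,t_0)$ into $4^k$ congruent parabolic pieces, the telescoping/Cauchy argument, and the final triangle-inequality split are exactly the steps used there --- but there is a genuine gap at the one analytic step where you convert the dyadic sum into the Dini integral. The inequality $\sum_{k\ge0}\omega_{\mathbf A}^{\mathsf x}(2^{-k}r)\le c(d)\int_0^r\omega_{\mathbf A}^{\mathsf x}(\tau)\,\tau^{-1}\,d\tau$ does not follow from the Dini hypothesis ``after passing to the non-decreasing majorant''. Passing to $\tilde\omega(\rho):=\sup_{0<s\le\rho}\omega_{\mathbf A}^{\mathsf x}(s)$ does give $\sum_{k}\tilde\omega(2^{-k}r)\le c\int_0^{r}\tilde\omega(\tau)\,\tau^{-1}\,d\tau$ by monotonicity, but you then need $\int_0^r\tilde\omega(\tau)\,\tau^{-1}\,d\tau\le c\int_0^r\omega_{\mathbf A}^{\mathsf x}(\tau)\,\tau^{-1}\,d\tau$, and this is false for a general Dini modulus: a function with spikes of height $2^{-i}$ at the scales $2^{-2^i}r$, decaying geometrically away from each spike, satisfies the Dini condition, yet its non-decreasing majorant at scale $2^{-j}r$ is of order $1/j$, whose Dini integral diverges. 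Since the right-hand side of \eqref{eq12.32} is the Dini integral of $\omega_{\mathbf A}^{\mathsf x}$ itself, you also cannot simply restate the lemma with the majorant in its place.

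The missing ingredient is a structural property of sup-type mean-oscillation moduli rather than of general Dini functions: $\omega_{\mathbf A}^{\mathsf x}(t)\simeq\omega_{\mathbf A}^{\mathsf x}(s)$ whenever $t\simeq s$, with constants depending only on $d$; this is precisely how the paper argues, citing the proof on p.~495 of the Li reference. From it one gets directly $\omega_{\mathbf A}^{\mathsf x}(2^{-k}r)\le C(d)(\ln 2)^{-1}\int_{2^{-k-1}r}^{2^{-k}r}\omega_{\mathbf A}^{\mathsf x}(\tau)\,\tau^{-1}\,d\tau$, and summing over $k$ yields the conversion you need; the same comparability also handles the first term of your final split, via $\omega_{\mathbf A}^{\mathsf x}(r,X_0)\le\omega_{\mathbf A}^{\mathsf x}(r)\le C(d)\int_{r/2}^{r}\omega_{\mathbf A}^{\mathsf x}(\tau)\,\tau^{-1}\,d\tau$. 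Note that even the $L_1$-convergence assertion requires this: for a non-monotone modulus, finiteness of the Dini integral alone does not control the dyadic sum $\sum_k\omega_{\mathbf A}^{\mathsf x}(2^{-k}r)$. With this one substitution your proof coincides with the paper's; everything else, including the tiling argument and the subsequence extraction giving symmetry and \eqref{ellipticity} for $\mathbf A_{x_0}$, is fine.
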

\begin{proof}
By the triangle inequality,
\begin{align*}
\sum_{k=0}^\infty \,\abs{\bar{\mathbf A}^{\mathsf x}_{x_0,2^{-k}r}(t)-\bar{\mathbf A}^{\mathsf x}_{x_0,2^{-k-1}r}(t)}\le\sum_{k=0}^\infty 2^{d}  \fint_{B_{2^{-k}r}(x_0)} \abs{\mathbf A(t,x)-\bar{\mathbf A}^{\mathsf x}_{x_0,2^{-k}r}(t)}\,dx.
\end{align*}
Therefore, by the Fubini theorem, we have
\begin{align}
							\nonumber
\sum_{k=0}^\infty \int_{t_0-r^2}^{t_0} &\abs{\bar{\mathbf A}^{\mathsf x}_{x_0,2^{-k}r}(t)-\bar{\mathbf A}^{\mathsf x}_{x_0,2^{-k-1}r}(t)}\,dt
 \le 2^{d} \sum_{k=0}^\infty\ \int_{t_0-r^2}^{t_0} \fint_{B_{2^{-k}r}(x_0)} \abs{\mathbf A-\bar{\mathbf A}^{\mathsf x}_{x_0,2^{-k}r}(t)}\,dxdt \\
							\nonumber
&\qquad\qquad\le 2^{d} \sum_{k=0}^\infty \sum_{j=0}^{2^{2k}-1}\int_{t_0-(j+1)(2^{-k}r)^2}^{t_0-j(2^{-k}r)^2}\fint_{B_{2^{-k}r}(x_0)} \abs{\mathbf A-\bar{\mathbf A}^{\mathsf x}_{x_0,2^{-k}r}(t)} \,dx dt\\
							\label{eq2321sun}
&\qquad\qquad \le 2^{d}\sum_{k=0}^\infty \sum_{j=0}^{2^{2k}-1}(2^{-k}r)^2 \omega_{\mathbf A}^{\mathsf x}(2^{-k}r)=2^{d}r^2 \sum_{k=0}^\infty \omega_{\mathbf A}^{\mathsf x}(2^{-k}r).							\end{align}
In view of the proof on \cite[p. 495]{Y.Li2016},
we have $\omega_{\mathbf A}^{\mathsf x}(t) \simeq \omega_{\mathbf A}^{\mathsf x}(s)$ when $t \simeq s$. Thus, \eqref{eq2321sun} implies
\begin{equation}
                            \label{eq12.11}
\sum_{k=0}^\infty \int_{t_0-r^2}^{t_0} \abs{\bar{\mathbf A}^{\mathsf x}_{x_0,2^{-k}r}(t)-\bar{\mathbf A}^{\mathsf x}_{x_0,2^{-k-1}r}(t)}\,dt
 \le C(d)r^2 \int_0^r \frac{\omega_{\mathbf A}^{\mathsf x}(s)}{s}\,ds<\infty.
\end{equation}
Therefore, $\{\bar{\mathbf A}^{\mathsf x}_{x_0,2^{-k}r}(\cdot)\}$ is a Cauchy sequence in $L_1((t_0-r^2,t_0))$.
Let $\mathbf A_{x_0}(\cdot)$ be the limit. Thus, from \eqref{eq12.11}, we have
\begin{equation}
                            \label{eq12.11b}
\int_{t_0-r^2}^{t_0} \abs{\bar{\mathbf A}^{\mathsf x}_{x_0,r}(t)-\mathbf A_{x_0}(t)}\,dt
 \le C(d)r^2 \int_0^r \frac{\omega_{\mathbf A}^{\mathsf x}(s)}{s}\,ds.
\end{equation}
Finally, by using the triangle inequality and \eqref{eq12.11b},
\begin{align*}
\fint_{Q_r^-(X_0)} \abs{\mathbf A-\mathbf A_{x_0}} &\le \fint_{Q_r^-(X_0)} \abs{\mathbf A-\bar{\mathbf A}^{\mathsf x}_{x_0,r}(t)}+\fint_{Q_r^-(X_0)} \abs{\bar{\mathbf A}^{\mathsf x}_{x_0,r}(t)-\mathbf A_{x_0}(t)}\\
&\le \omega_{\mathbf A}^{\mathsf x}(r)+\fint_{t_0-r^2}^{t_0} \abs{\bar{\mathbf A}^{\mathsf x}_{x_0,r}(t)-\mathbf A(t, x_0)}\,dt \le c(d) \int_0^r \frac{\omega_{\mathbf A}^{\mathsf x}(s)}{s}\,ds.\qedhere
\end{align*}
\end{proof}

\begin{remark}
By a slight modification of the proof above, it is easily seen that $\mathbf A_{x_0}$ is independent of $t_0$ and $r$. Moreover, if $\mathbf A$ is continuous in $x$, then clearly $\mathbf A_{x_0}(t)=\mathbf A(t,x_0)$ for a.e. $t$.
\end{remark}

We now consider the parabolic operator $P_0$ defined by
\begin{equation*}
P_0 u:= \partial_t u- a^{ij}_0(t)D_{ij}u= \partial_t u - \tr(\mathbf A_{x_0}(t) D^2 u).
\end{equation*}
Let $\Phi(t,x, s, y)$ be the fundamental solution for $P_0$.
It is well known that there are positive constants $C_0=C_0(d, \lambda, \Lambda)$ and $\kappa_0=\kappa_0(\lambda, \Lambda)$ such that
\begin{equation}			\label{eq1900m}
\abs{\Phi(t,x,s,y)} \le C_0 (t-s)^{-d/2} e^{-\kappa_0 \frac{\abs{x-y}^2}{t-s}}\quad\text{for }\;t>s
\end{equation}
and $\Phi(t,x,s,y) \equiv 0$ if $t<s$.
See, for instance, \cite[Chapter 9]{Friedman} and \cite[Chapter 2]{Krylov08}.
Since $\mathbf A_{x_0}$ does not depend on $x$, we also have
\begin{equation}			\label{eq1903m}
\Phi(t,x,s,y)=\Phi^\ast(s,y,t,x),
\end{equation}
where $\Phi^\ast$ is fundamental solution for the adjoint operator $P_0^\ast$ given by
\begin{equation*}
P_0^\ast u:= -\partial_t u- D_{ij}(a^{ij}_0(t) u)=-\partial_t u - \dv^2(\mathbf A_{x_0}(t) u).
\end{equation*}

Note that, if we set $v=\Gamma^\ast(\cdot,X_0)-\Phi^\ast(\cdot,X_0)$, then it would satisfy
\begin{align*}
P^\ast v
&=P^\ast\Gamma^\ast (\cdot,X_0)- P^\ast \Phi^\ast(\cdot,X_0) + P_0^\ast\Phi^\ast(\cdot,X_0)-P_0^\ast\Phi^\ast(\cdot,X_0)\\
&=\dv^2((\mathbf A- \mathbf A_{x_0})\Phi^\ast(\cdot, X_0)).
\end{align*}
We are thus lead to consider the problem
\begin{equation}				\label{eq1421sun}
P^\ast v=\dv^2\mathbf{g}  \;\text{ in }\; (t_0-T,t_0)\times\bR^d,\quad
v(t_0,\cdot)=0 \; \text{ on }\;\bR^d,
\end{equation}
where $T>0$ and
\begin{equation*}				
\mathbf g:=(\mathbf A- \mathbf A_{x_0})\Phi^\ast(\cdot, X_0).
\end{equation*}
By a straightforward computation using \eqref{eq1900m} and \eqref{eq1903m}, for $p \in (0, \frac{d+2}{d})$, we have
\begin{align*}
\int_{t_0-T}^{t_0}\int_{\bR^d} \abs{\mathbf g}^p\,dxdt& \le \norm{\mathbf A-\mathbf A_{x_0}}_\infty^p \int_0^T \int_{\bR^d}\left(C_0 t^{-d/2} e^{-\kappa_0 \abs{x}^2/t}\right)^p\,dxdt\\
&=\norm{\mathbf A-\mathbf A_{x_0}}_\infty^p  C_0^p \int_0^T t^{d/2-dp/2}\int_{\bR^d} t^{-d/2} e^{-\kappa_0 p \abs{x}^2/t}\,dx dt\\
&= \norm{\mathbf A-\mathbf A_{x_0}}_\infty^p  C_0^p \int_{\bR^d} e^{-\kappa_0 p \abs{x}^2}\,dx \int_0^T t^{d/2-dp/2}\,dt= C T^{\frac{2+d-dp}{2}},
\end{align*}
where $C=C(d,\lambda, \Lambda, p)$.
We just proved that
\begin{equation}				\label{eq0854m}
\mathbf g \in L_p((t_0-T, t_0)\times \bR^d), \quad \forall T>0,\;\; \forall p \in (0, \tfrac{d+2}{d}).
\end{equation}
Therefore, for $1<p<\tfrac{d+2}{d}$, by Lemma \ref{lem01} there is a unique $L_p$ solution $v$ of the problem \eqref{eq1421sun}.
By extending $v=0$ on $(t_0, \infty) \times \bR^d$ and letting $T \to \infty$,
we may assume that $v$ is defined on the entire $\bR^{d+1}$.

\begin{lemma}				
Let $v$ be as above.
The function $\Gamma^\ast(\cdot ,X_0)$ defined by
\begin{equation*}				
\Gamma^\ast(\cdot ,X_0)= v+\Phi^\ast(\cdot, X_0)
\end{equation*}
is the fundamental solution of $P^\ast$ with a pole at $X_0=(t_0, x_0)$.
\end{lemma}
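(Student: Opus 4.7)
The plan is to directly verify the defining transposition identity for $\Gamma^\ast(\cdot, X_0) := v + \Phi^\ast(\cdot, X_0)$, namely
\[
\int_{\bR^{d+1}} \Gamma^\ast(\cdot, X_0)\, P\phi = \phi(X_0) \quad \text{for all } \phi \in C_c^\infty(\bR^{d+1}),
\]
which is the distributional form of $P^\ast \Gamma^\ast(\cdot, X_0) = \delta_{X_0}$. The crucial algebraic identity underlying the construction of $\mathbf g$ is the pointwise relation $P\phi = P_0\phi - \tr\bigl((\mathbf A - \mathbf A_{x_0}) D^2\phi\bigr)$ for smooth $\phi$.

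I would first split $\int \Gamma^\ast(\cdot, X_0) P\phi = \int v\, P\phi + \int \Phi^\ast(\cdot, X_0) P\phi$. For the first summand, I invoke the transposition characterization from Lemma~\ref{lem01}, applied on $(t_0-T,t_0)\times\bR^d$ with $T$ large enough so that the time projection of $\supp\phi$ is contained in $(t_0-T,\infty)$, together with a standard cutoff/density argument to accommodate that $\phi$ need not vanish at $t_0-T$; this yields $\int v\, P\phi = \int \tr(\mathbf g\, D^2\phi)$. For the second summand, using that $\Phi^\ast(\cdot, X_0)$ is the fundamental solution of $P_0^\ast$ with pole at $X_0$ gives $\int \Phi^\ast(\cdot,X_0) P_0\phi = \phi(X_0)$, and substituting the pointwise identity above together with the definition of $\mathbf g$ gives $\int \Phi^\ast(\cdot,X_0) P\phi = \phi(X_0) - \int \tr(\mathbf g\, D^2\phi)$. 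Summing, the $\mathbf g$-terms cancel and the desired identity drops out.

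From this transposition identity, restricting $\phi\in C_c^\infty((-\infty,t_0)\times\bR^d)$ yields $P^\ast\Gamma^\ast(\cdot, X_0) = 0$ in $(-\infty,t_0)\times\bR^d$, and the initial-trace condition $\lim_{s\to t_0^-}\Gamma^\ast(s,\cdot,X_0) = \delta_{x_0}$ reduces, thanks to $v(t_0,\cdot)=0$ from Lemma~\ref{lem01}, to the corresponding well-known property of $\Phi^\ast$. Combined with the facts that $v = 0$ on $(t_0,\infty)\times\bR^d$ by construction and $\Phi^\ast(s,\cdot,X_0) = 0$ for $s > t_0$ by \eqref{eq1903m}, this completes the verification of all defining properties.

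The only nontrivial point is the cutoff argument in the treatment of $\int v\, P\phi$, since Lemma~\ref{lem01} stipulates test functions that vanish at the lower time endpoint of the slab, whereas a generic $\phi\in C_c^\infty(\bR^{d+1})$ does not; here the $L_p$-integrability of both $v$ and $\mathbf g$ on every finite slab (guaranteed by \eqref{eq0854m} and Lemma~\ref{lem01}) permits a routine truncation-and-limit argument, so no genuine obstacle arises.
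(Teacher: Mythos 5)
Your proposal is correct and is essentially the paper's own argument: the same decomposition $\Gamma^\ast(\cdot,X_0)=v+\Phi^\ast(\cdot,X_0)$, the same use of the transposition identity defining $v$, and the same cancellation coming from $P\phi=P_0\phi-\tr\bigl((\mathbf A-\mathbf A_{x_0})D^2\phi\bigr)$ together with the fundamental-solution property of $\Phi^\ast$ for $P_0^\ast$. The only (harmless) differences are that the paper pairs $\Gamma^\ast(\cdot,X_0)$ with $W^{1,2}_{p'}$ solutions $u$ of $Pu=f$, $f\in C^\infty_c$, which directly yields the representation formula later recorded as Proposition~\ref{prop3.5}, whereas your pairing with $\phi\in C^\infty_c$ gives the distributional identity only, and your cutoff/density step at $t_0-T$ is unnecessary since choosing $T$ large already makes $\phi$ vanish near that time slice.
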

\begin{proof}
For any $f\in C^\infty_c(\bR^{d+1})$, fix a $T> \abs{t_0}$ such that $(-T, T)\times \bR^d$ contains the support of $f$.
For $p'>\frac{d+2}{2}$, let $u\in W^{1,2}_{p'}((-T, T) \times \bR^d)$ be the solution of the problem $Pu=f$ with  $u(-T,\cdot)=0$.
Then, by \eqref{eq1421sun} we have
\begin{align*}
\int_{-T}^{T} \int_{\bR^{d}} vf
&=\int_{-T}^{T} \int_{\bR^{d}} vPu
=\int_{-T}^{T} \int_{\bR^{d}} (P^*v)u\\
&=\int_{-T}^{T} \int_{\bR^{d}} \tr((\mathbf A- \mathbf A_{x_0})\Phi^\ast(\cdot, X_0) D^2u) = \int_{-T}^{T} \int_{\bR^{d}}\Phi^\ast(\cdot, X_0)(P_0 u- P u)\\
&=\int_{-T}^{T} \int_{\bR^{d}} \Phi^\ast(\cdot, X_0) P_0 u -\int_{-T}^{T} \int_{\bR^{d}}\Phi^\ast(\cdot,X_0)f= u(X_0)- \int_{-T}^{T} \int_{\bR^{d}} \Phi^\ast(\cdot,X_0)f,
\end{align*}
where in the last equality we use the fact that $\Phi^\ast(\cdot, X_0)$ is the fundamental solution for $P_0^\ast$.
Therefore, we have
\begin{equation}			\label{eq2037m}
u(X_0)=\int_{-T}^{T} \int_{\bR^{d}} \Gamma^\ast(\cdot,X_0)f= \int_{\bR^{d+1}} \Gamma^\ast(\cdot,X_0)f,
\end{equation}
which means that $\Gamma^\ast(\cdot,X_0)$ is the fundamental solution for $P^\ast$ with a pole at $X_0$.
\end{proof}
Noting that $\Gamma^\ast(s,y,t,s)=0$ for $t<s$, we actually proved the following.
\begin{proposition}			\label{prop3.5}
For $p>\frac{d+2}{2}$ and $f\in L_p((t_0, t_1)\times \bR^d)$, if $u\in W^{1,2}_{p}((t_0, t_1)\times \bR^d)$ is the solution of $Pu=f$ in $(t_0, t_1)\times \bR^d$ satisfying $u(t_0,\cdot)=0$, then we have the representation formula
\[
u(t,x)= \int_{t_0}^{t} \int_{\bR^{d}} \Gamma^\ast(s,y,t,x) f(s,y)\, dy ds.
\]
\end{proposition}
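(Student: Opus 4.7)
The plan is to upgrade the representation \eqref{eq2037m} of the preceding lemma from $C_c^\infty$ data to $L_p$ data by a density argument, exploiting the causality property $\Gamma^\ast(s,y,t,x) = 0$ for $s > t$ to collapse the full space-time integral onto the backward half-space $\{s < t\}$. After relabeling the pole $X_0$ as $(t,x)$, this causality is built into the construction of $\Gamma^\ast$: $\Phi^\ast$ vanishes on $\{s > t\}$ by \eqref{eq1900m}--\eqref{eq1903m}, and $v$ was extended by zero there as well. Combined with the support of $f_n$ in $(t_0,t_1)\times \bR^d$, the $\bR^{d+1}$ integral in \eqref{eq2037m} reduces at once to $(t_0,t) \times \bR^d$, so for $f \in C_c^\infty((t_0,t_1)\times \bR^d)$ the proposition follows immediately by taking $T$ large in the preceding lemma and invoking uniqueness to identify the two candidate solutions on $(t_0,t_1)\times \bR^d$.

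For the density step, I approximate $f \in L_p((t_0,t_1)\times \bR^d)$ by $f_n \in C_c^\infty((t_0,t_1)\times \bR^d)$ with $f_n \to f$ in $L_p$, and let $u_n \in W_p^{1,2}((t_0,t_1)\times \bR^d)$ solve $P u_n = f_n$ with $u_n(t_0,\cdot) = 0$. The existence of $u_n$ and the convergence $u_n \to u$ in $W_p^{1,2}$ follow from the $L_p$ estimate \eqref{eq1825w}, which is valid for every exponent in $(1,\infty)$ under the $\mathsf{DMO_x}$ assumption. Since $p > (d+2)/2$, the parabolic Sobolev embedding $W_p^{1,2} \hookrightarrow C$ gives uniform convergence $u_n \to u$ on $[t_0,t_1]\times \bR^d$, and in particular $u_n(t,x) \to u(t,x)$ at every point. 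On the right-hand side I apply H\"older's inequality with the conjugate exponent $p' = p/(p-1) < (d+2)/d$: the kernel $\Gamma^\ast(\cdot,(t,x)) = \Phi^\ast(\cdot,(t,x)) + v$ belongs to $L_{p'}((t_0,t)\times \bR^d)$ because the Gaussian bound \eqref{eq1900m} yields $\Phi^\ast(\cdot,(t,x)) \in L_q$ for every $q < (d+2)/d$, and the corresponding $v$ lies in the same space by Lemma~\ref{lem01} applied to \eqref{eq1421sun} with inhomogeneity $\mathbf g$ satisfying \eqref{eq0854m}. Hence $\int \Gamma^\ast(\cdot,(t,x))\, f_n \to \int \Gamma^\ast(\cdot,(t,x))\, f$, and passing to the limit in the representation formula for $u_n$ completes the argument.

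The main technical point is the precise match between the threshold $p > (d+2)/2$ and the integrability of $\Gamma^\ast(\cdot,(t,x))$: this single exponent simultaneously secures the Sobolev embedding into $C$ (so that pointwise evaluation of $u$ is legitimate and stable under $W_p^{1,2}$-convergence) and, via its H\"older conjugate, the $L_{p'}$ integrability of the kernel inherited from the construction through \eqref{eq0854m} and Lemma~\ref{lem01}. No further regularity of $\mathbf A$ beyond $\mathsf{DMO_x}$ is required.
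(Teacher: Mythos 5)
Your argument is correct, but it reaches the proposition by a different mechanism than the paper does. The paper's proof is essentially the remark preceding the statement: the duality computation in the lemma establishing \eqref{eq2037m} never actually uses smoothness or compact support of $f$ beyond integrability — the defining identity \eqref{eq13.53pde} for the adjoint solution $v$ is tested against \emph{arbitrary} $W^{1,2}_{p'}$ functions with $p'>\frac{d+2}{2}$, and every pairing in that chain of equalities converges because $v$ and $\Phi^\ast(\cdot,X_0)$ lie in $L_q$ for $q<\frac{d+2}{d}$ by \eqref{eq0854m} and \eqref{eq1900m}; so one simply re-reads that proof with $f\in L_p$, $p>\frac{d+2}{2}$, and uses the causality $\Gamma^\ast(s,y,t,x)=0$ for $s>t$ (together with uniqueness, to identify the solution on $(-T,T)$ that vanishes before $t_0$ with the given $u$) to collapse the integral to $(t_0,t)\times\bR^d$. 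You instead treat \eqref{eq2037m} as a black box for $C^\infty_c$ data and upgrade by density, which forces you to supply three extra ingredients — the $W^{1,2}_p$ solvability and estimate \eqref{eq1825w} for the approximating problems, the parabolic Sobolev embedding $W^{1,2}_p\hookrightarrow C$ for $p>\frac{d+2}{2}$ to get pointwise stability of $u_n(t,x)$, and the $L_{p'}$ integrability of $\Gamma^\ast(\cdot,(t,x))$ on $(t_0,t)\times\bR^d$ (Gaussian bound for $\Phi^\ast$ plus Lemma~\ref{lem01} for $v$) to pass to the limit on the right-hand side via H\"older with $p'=p/(p-1)<\frac{d+2}{d}$. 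All of these steps are correctly justified (implicitly you take $p<\infty$, which is the intended range), and your identification of the lemma's solution with $u_n$ on $(t_0,t_1)$ by uniqueness and causality is exactly right; your route is a bit longer but has the merit of making explicit the quantitative facts — in particular the precise matching of the exponent $p>\frac{d+2}{2}$ with the kernel's integrability — that the paper leaves implicit in the phrase ``we actually proved.''
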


\subsection{Pointwise bound for the adjoint fundamental solution}		\label{sec3.2}
Let $R_0>0$ be fixed but arbitrary.
We shall show that there exists a constant $C$ depending $R_0$ as well as on $d$, $\lambda$, $\Lambda$, and $\omega_{\mathbf A}^{\mathsf x}$ such that we have
\begin{equation}				\label{eq1722sun}
\abs{\Gamma^\ast(X,X_0)} \le C \abs{X-X_0}^{-d}\quad\text{for all $X$ satisfying $0<\abs{X-X_0} <R_0$.}
\end{equation}
Define $\mathbf{g}_1$ and $\mathbf{g}_2$ by setting
\begin{equation}					\label{eq1654t}
\mathbf{g}_1= \zeta(\mathbf{A}- \mathbf{A}_{x_0}) \Phi^\ast(\cdot, X_0)\quad\text{and}\quad \mathbf{g}_2= (1-\zeta)(\mathbf{A}-\mathbf{A}_{x_0}) \Phi^\ast(\cdot, X_0),
\end{equation}
where $\zeta$ is a smooth function on $\bR^{d+1}$ such that
\begin{equation*}			
0\leq \zeta \leq 1, \quad \zeta=0 \;\text{ in }\;Q_R(X_0),\quad \zeta=1\;\text{ in }\bR^{d+1}\setminus Q_{2R}(X_0),\quad \abs{D_x\zeta} \le 4/R,
\end{equation*}
and $R>0$ is a constant to be fixed later.
Since $\Phi^\ast(\cdot, X_0)$ vanished on $(t_0, \infty)\times \bR^d$, we see that
\begin{equation}			\label{eq1947sun}
\mathbf g_1 = \mathbf g_2 \equiv 0\quad\text{on }\; (t_0,\infty) \times \bR^d.
\end{equation}
Also, by \eqref{eq1900m} and \eqref{eq1903m},  there is a postitive constant $C_0'=C_0'(d, \lambda, \Lambda)$ such that
\begin{equation}			\label{eq1948sun}
\abs{\Phi^\ast(X, X_0)} \le C_0' \abs{X-X_0}^{-d},\quad \forall X \neq X_0.
\end{equation}
In the following lemmas, we show that $\mathbf{g}_1 \in L_{p_1}$ for  $p_1>\frac{d+2}{d}$ and $\mathbf{g}_2 \in L_{p_2}$ for $1 \le p_2 <\frac{d+2}{d}$.

\begin{lemma}					
For $p>\frac{d+2}{d}$, there is a constant $C=C(d,\lambda, \Lambda, p)$ such that
\begin{equation}				\label{eq0831m}
\norm{\mathbf g_1}_{L_{p}(\bR^{d+1})} \le C R^{\frac{d+2}{p}-d}.
\end{equation}
For $1 \le p < \frac{d+2}{d}$, there is a constant $C=C(d,\lambda, \Lambda, p)$ such that
\begin{equation}				\label{eq1540th}
\norm{\mathbf{g}_2}_{L_{p}(\bR^{d+1})} \le C\left(\int_0^{2R} \frac{\omega_{\mathbf A}^{\mathsf x}(s)}{s}\,ds\right)^{1/p} R^{\frac{d+2}{p}-d}.
\end{equation}
\end{lemma}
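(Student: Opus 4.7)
\medskip

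\noindent\textbf{Proof plan.}
Both estimates follow by combining the pointwise bound \eqref{eq1948sun} on $\Phi^{\ast}(\cdot,X_0)$ with the support properties of the cutoff, together with \eqref{eq1947sun} which reduces every integration to the past cylinders $Q_{r}^{-}(X_0)$. Since the coefficients are bounded, we also have the trivial pointwise bound $\abs{\mathbf A-\mathbf A_{x_0}}\le 2\Lambda$.

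For \eqref{eq0831m}, the support of $\mathbf g_1$ is contained in $\bigl((t_0-\infty,t_0)\times\bR^d\bigr)\setminus Q_R(X_0)$. So the plan is simply to write
\[
\norm{\mathbf g_1}_{L_p}^p\le (2\Lambda)^p (C_0')^p\int_{\abs{X-X_0}>R}\abs{X-X_0}^{-dp}\,dX,
\]
and compute the right-hand side by slicing in the parabolic distance: since a parabolic sphere of radius $r$ carries $(d{+}1)$-dimensional measure $\sim r^{d+1}$, the integral equals a constant times $\int_{R}^{\infty} r^{d+1-dp}\,dr$, which converges precisely because $p>(d+2)/d$ (i.e.\ $dp>d+2$) and evaluates to $CR^{d+2-dp}$. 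Taking $p$-th roots gives \eqref{eq0831m}. This is the routine step.

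The substantive step is \eqref{eq1540th}. The support of $\mathbf g_2$ lies in $Q_{2R}^{-}(X_0)$, where $\Phi^{\ast}(\cdot,X_0)$ has a non-integrable singularity when $dp\ge d+2$, so boundedness of $\abs{\mathbf A-\mathbf A_{x_0}}$ alone will not do; I must exploit the Dini-mean-oscillation improvement recorded in \eqref{eq12.32}. The idea is a dyadic decomposition: set
\[
A_k:=Q_{2^{1-k}R}^{-}(X_0)\setminus Q_{2^{-k}R}^{-}(X_0),\qquad k=0,1,2,\dots,
\]
so that $\bigcup_k A_k\supset Q_{2R}^{-}(X_0)$ up to a null set. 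On $A_k$, \eqref{eq1948sun} gives $\abs{\Phi^{\ast}(\cdot,X_0)}\le C_0'(2^{-k}R)^{-d}$, while writing $\abs{\mathbf A-\mathbf A_{x_0}}^p\le(2\Lambda)^{p-1}\abs{\mathbf A-\mathbf A_{x_0}}$ lets me invoke the $L_1$-type bound \eqref{eq12.32}, yielding
\[
\int_{Q_{2^{1-k}R}^{-}(X_0)}\abs{\mathbf A-\mathbf A_{x_0}}\le C(2^{-k}R)^{d+2}\int_{0}^{2R}\frac{\omega_{\mathbf A}^{\mathsf x}(s)}{s}\,ds.
\]
Combining these two bounds on each $A_k$ produces
\[
\int_{A_k}\abs{\mathbf g_2}^p\le C(2^{-k}R)^{d+2-dp}\int_{0}^{2R}\frac{\omega_{\mathbf A}^{\mathsf x}(s)}{s}\,ds.
\]
The summability over $k$ of the geometric factor $2^{-k(d+2-dp)}$ is exactly the condition $p<(d+2)/d$. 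Summing and taking $p$-th roots yields \eqref{eq1540th}. The only real obstacle is the dyadic sum, which is engineered precisely to work in the stated range of $p$; the fact that our $L_1$ control \eqref{eq12.32} is sharper than pointwise boundedness of the coefficients is what lets us absorb the borderline integrability of $\abs{\Phi^{\ast}}^p$ near the pole.
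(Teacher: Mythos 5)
Your proposal is correct and follows essentially the same route as the paper: for \eqref{eq1540th} you use exactly the paper's dyadic annuli $Q_{2^{1-k}R}^-(X_0)\setminus Q_{2^{-k}R}^-(X_0)$, the reduction $\abs{\mathbf A-\mathbf A_{x_0}}^p\le C\abs{\mathbf A-\mathbf A_{x_0}}$, and the $L_1$ bound \eqref{eq12.32}, with summability giving the restriction $p<\frac{d+2}{d}$. For \eqref{eq0831m} you integrate $\abs{X-X_0}^{-dp}$ over $\{\abs{X-X_0}>R\}$ by continuous slicing in the parabolic distance rather than the paper's dyadic shells, which is an immaterial variation yielding the same bound $CR^{d+2-dp}$.
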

\begin{proof}
Note that $\norm{\mathbf A- \mathbf A_{x_0}}_\infty \le C(d, \Lambda)$.
Therefore, if $p>\frac{d+2}{d}$, we get from \eqref{eq1948sun} that
\begin{align*}				
\int_{\bR^{d+1}} \abs{\mathbf{g}_1}^{p} &\le C \sum_{k=0}^\infty \int_{Q_{2^{k+1}R}(X_0) \setminus Q_{2^k R}(X_0)} \abs{X-X_0}^{-d p}\,dX\\
&\le C \sum_{k=0}^\infty   (2^k R)^{-dp} (2^{k+1}R)^{d+2} \le C R^{d+2-dp}.
\end{align*}
When $1 \le p < \frac{d+2}{d}$,
by \eqref{eq1947sun}, \eqref{eq1948sun}, the properties of $\zeta$, and \eqref{eq12.32}, we  have
\begin{align*}				
\int_{\bR^{d+1}} \abs{\mathbf{g}_2}^{p}
&\le C \sum_{k=0}^\infty \int_{Q_{2^{1-k}R}^-(X_0) \setminus Q_{2^{-k} R}^-(X_0)}\abs{\mathbf A- \mathbf A_{x_0}}^p\, \abs{X-X_0}^{-d p}\,dX\\
&\le C \sum_{k=0}^\infty (2^{-k} R)^{-dp}\int_{Q_{2^{1-k}R}^{-}(X_0)}\abs{\mathbf A- \mathbf A_{x_0}}\\
&\le C \sum_{k=0}^\infty   (2^{-k} R)^{-dp} (2^{1-k} R)^{d+2}  \int_0^{2^{1-k}R} \frac{\omega_{\mathbf A}^{\mathsf x}(s)}{s}\,ds\\
&\le C \sum_{k=0}^\infty   (2^{-k} R)^{d+2-dp} \int_0^{2R} \frac{\omega_{\mathbf A}^{\mathsf x}(s)}{s}\,ds  \le C R^{d+2-dp}\int_0^{2R} \frac{\omega_{\mathbf A}^{\mathsf x}(s)}{s}\,ds
\end{align*}
and the lemma follows.
\end{proof}

Let $v$ be the solution of the problem \eqref{eq1421sun}.
Fix $p_1 \in (\frac{d+2}{d},\infty)$ and $p_2 \in (1, \frac{d+2}{d})$ and let  $v_i \in L_{p_i}((t_0-R_0^2,t_0)\times\bR^d)$ be the solution of the problems
\begin{equation*}			
P^\ast v_i=\dv^2\mathbf g_i  \;\text{ in }\; (t_0-R_0^2,t_0)\times\bR^d,\quad
v_i(t_0,\cdot)=0  \;\text{ on }\;\bR^d. \qquad  (i=1,2).
\end{equation*}
Then by Lemma~\ref{lem01} together with \eqref{eq0831m} and \eqref{eq1540th},  respectively, we have
\begin{equation}				\label{eq1541th}
\norm{v_1}_{L_{p_1}((t_0-R_0^2,t_0)\times\bR^d)} \le C R^{(d+2)/p_1-d}
\end{equation}
and
\begin{equation}				\label{eq1618th}
\norm{v_2}_{L_{p_2}(t_0-R_0^2,t_0)\times\bR^d)} \le  C\left(\int_0^{2R} \frac{\omega_{\mathbf A}^{\mathsf x}(r)}{r}\,dr\right)^{1/p_2} R^{(d+2)/p_2-d}.
\end{equation}
We note that the constant $C$ in the above depends on $R_0$ as well as on $d$, $\lambda$, $\Lambda$, $p_1$, $p_2$, and $\omega_{\mathbf A}^{\mathsf x}$.
By the same computation as in \eqref{eq0854m}, we find $\mathbf g_1 \in L_{p_2}((t_0-R_0^2,t_0)\times\bR^d)$ as well, and thus $v_1 \in L_{p_2}((t_0-R_0^2,t_0)\times\bR^d)$.
Therefore, by the uniqueness, we see that
\begin{equation*}		
v=v_1+v_2.
\end{equation*}
We extend $v_1$ and $v_2$ by zero on $(t_0,\infty) \times \bR^d$.

Now, for any fixed $Y_0=(s_0, y_0)$ with $0<\abs{Y_0-X_0}<R_0$, we take
\[
R=\tfrac15 \abs{Y_0-X_0}
\]
and estimate $v_1(Y_0)$ and $v_2(Y_0)$ by using Lemma~\ref{lem02} as follows:
\begin{equation}		\label{eq2017th}
\abs{v_i(Y_0)} \le  C \fint_{Q_{2R}^+(Y_0)} \abs{v_i} + C \int_0^{R} \frac{\omega_{\mathbf g_i}^{\mathsf x}(r, Q_{2R}^+(Y_0))}{r}\,dr. \quad(i=1,2).
\end{equation}
By H\"older's inequality, \eqref{eq1541th}, and \eqref{eq1618th}, we have
\begin{equation}		\label{eq2018th}
\begin{aligned}
\fint_{Q_{2R}^+(Y_0)} \abs{v_1} & \le  C R^{-(d+2)/p_1} \norm{v_1}_{L_{p_1}} \le  C R^{-d},\\
\fint_{Q_{2R}^+(Y_0)} \abs{v_2} &\le C R^{-(d+2)/p_2} \norm{v_2}_{L_{p_2}} \le C R^{-d} \left(\int_0^{2R} \frac{\omega_{\mathbf A}^{\mathsf x}(r)}{r}\,dr\right)^{1/p_2}.
\end{aligned}
\end{equation}

\begin{lemma}			\label{lem3.13}
Suppose $R:=\frac15 \abs{Y_0-X_0} >0$ and let $\eta$ be a Lipschitz function on $\bR^{d+1}$ such that $0 \le \eta \le 1$ and $\abs{D_x \eta} \le 4/R$.
Set
\[
\mathbf{g}=\eta (\mathbf{A}-\mathbf{A}_{x_0}) \Phi^\ast(\cdot, X_0).
\]
Then, for any $r \in (0, R]$ we have
\[
\omega_{\mathbf{g}}^{\mathsf x}(r, Q_{2R}^+(Y_0)) \le C R^{-d}\left(\omega_{\mathbf A}^{\mathsf x}(r)+ \frac{r}{R} \int_0^{r} \frac{\omega_{\mathbf A}^{\mathsf x}(s)}{s}\,ds\right),
\]
where $C=C(d, \lambda, \Lambda)$.
\end{lemma}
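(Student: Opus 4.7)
The plan is to separate the oscillation of $\mathbf g$ in $y$ into two mechanisms: the Dini mean oscillation of $\mathbf A$ itself (yielding the $\omega_{\mathbf A}^{\mathsf x}(r)$ contribution) and the Lipschitz regularity of the smooth factor $\eta\,\Phi^{\ast}(\cdot,X_0)$ on $Q_r^-(X)$ (yielding the $(r/R)$ prefactor), which holds because $X\in Q_{2R}^+(Y_0)$ lies far from the pole $X_0$.

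First I would record the geometric and smoothness input. Since $|X-X_0|\ge |Y_0-X_0|-|X-Y_0|>5R-2R=3R$ in parabolic distance, for $r\le R$ and $(s,y)\in Q_r^-(X)$ one has $|(s,y)-X_0|\ge 2R$. On that region the Gaussian bound \eqref{eq1900m}--\eqref{eq1903m} together with standard parabolic derivative estimates for the constant-in-$x$ operator $P_0$ yields $|\Phi^{\ast}(\cdot,X_0)|\le CR^{-d}$ and $|D_y\Phi^{\ast}(\cdot,X_0)|\le CR^{-d-1}$. Combined with $|D_x\eta|\le 4/R$ and $0\le\eta\le 1$, the product $\psi:=\eta\,\Phi^{\ast}(\cdot,X_0)$ satisfies $\|\psi\|_{L^\infty(Q_r^-(X))}\le CR^{-d}$ and $\|D_y\psi\|_{L^\infty(Q_r^-(X))}\le CR^{-d-1}$.

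Next, for a fixed $X=(t,x)\in Q_{2R}^+(Y_0)$ I would set $\bar{\mathbf A}(s):=\bar{\mathbf A}^{\mathsf x}_{x,r}(s)$ and split
\[
\mathbf g(s,y)-\bar{\mathbf g}^{\mathsf x}_{x,r}(s)=U_1(s,y)+U_2(s,y),
\]
where $U_1(s,y)=\psi(s,y)[\mathbf A(s,y)-\bar{\mathbf A}(s)]-\fint_{B_r(x)}\psi(s,y')[\mathbf A(s,y')-\bar{\mathbf A}(s)]\,dy'$ and $U_2(s,y)=[\bar{\mathbf A}(s)-\mathbf A_{x_0}(s)]\bigl[\psi(s,y)-\bar\psi^{\mathsf x}_{x,r}(s)\bigr]$. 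The pointwise bound $|U_1|\le 2\|\psi\|_\infty\bigl(|\mathbf A(s,y)-\bar{\mathbf A}(s)|+\fint_{B_r(x)}|\mathbf A-\bar{\mathbf A}|\bigr)$ together with Fubini gives $\fint_{Q_r^-(X)}|U_1|\le CR^{-d}\omega_{\mathbf A}^{\mathsf x}(r,X)\le CR^{-d}\omega_{\mathbf A}^{\mathsf x}(r)$, which produces the first term of the claim. For $U_2$, the Lipschitz/Poincar\'e estimate $\fint_{B_r(x)}|\psi(s,\cdot)-\bar\psi^{\mathsf x}_{x,r}(s)|\,dy\le CrR^{-d-1}$ reduces matters to bounding $\fint_{t-r^2}^t|\bar{\mathbf A}^{\mathsf x}_{x,r}(s)-\mathbf A_{x_0}(s)|\,ds$ by $C\int_0^{r}\omega_{\mathbf A}^{\mathsf x}(\tau)/\tau\,d\tau$, after which multiplication by $CrR^{-d-1}=CR^{-d}(r/R)$ produces the second term of the claim.

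The main obstacle is precisely that time-integral estimate. A dyadic telescoping centered at $x$, mirroring the Fubini computation \eqref{eq2321sun}--\eqref{eq12.11b} with $x_0$ replaced by $x$, gives $\fint_{t-r^2}^t|\bar{\mathbf A}^{\mathsf x}_{x,r}(s)-\mathbf A_x(s)|\,ds\le C\int_0^{r}\omega_{\mathbf A}^{\mathsf x}(\tau)/\tau\,d\tau$, where $\mathbf A_x(s):=\lim_{\rho\to 0}\bar{\mathbf A}^{\mathsf x}_{x,\rho}(s)$. The residual $\mathbf A_x(s)-\mathbf A_{x_0}(s)$ only satisfies the trivial $L^\infty$ bound $2\Lambda$, so to obtain the claimed clean form I would first rewrite $\mathbf g=\eta(\mathbf A-\mathbf A_x)\Phi^{\ast}(\cdot,X_0)+\eta(\mathbf A_x-\mathbf A_{x_0})\Phi^{\ast}(\cdot,X_0)$, run the $U_1$/$U_2$ decomposition on the first summand (whose $U_2$-type residue is exactly $\bar{\mathbf A}^{\mathsf x}_{x,r}-\mathbf A_x$, controlled by the telescoping Dini bound), and then show that the $y$-oscillation of the second summand (which comes solely from $\eta\Phi^{\ast}$ and is multiplied by the $y$-independent factor $\mathbf A_x-\mathbf A_{x_0}$) can be absorbed into the same two terms. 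This final absorption, which is needed in order to avoid an unwanted stray $(r/R)R^{-d}$ contribution, is the delicate accounting step of the proof.
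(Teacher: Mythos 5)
Your setup (the geometric estimate $\abs{X'-X_0}\ge 2R$ on $Q_r^-(X)$, the bounds $\abs{\psi}\le CR^{-d}$, $\abs{D_y\psi}\le CR^{-d-1}$ for $\psi=\eta\,\Phi^\ast(\cdot,X_0)$, and the $U_1/U_2$ splitting, which is essentially the paper's $I/II$ decomposition) is correct, and $U_1$ indeed yields the term $CR^{-d}\omega_{\mathbf A}^{\mathsf x}(r)$. But the proof stalls exactly where you say it does, and what you leave as ``the delicate accounting step'' is in fact the entire content of the lemma. After the Lipschitz gain $CrR^{-d-1}$ is extracted from $\psi$, one must control $\fint_{t-r^2}^t\abs{\bar{\mathbf A}^{\mathsf x}_{x,r}(s)-\mathbf A_{x_0}(s)}\,ds$; your telescoping at $x$ only reaches $\mathbf A_x$, and the leftover factor $\mathbf A_x-\mathbf A_{x_0}$, estimated trivially, produces the inadmissible stray term $C(r/R)R^{-d}$. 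Splitting $\mathbf g$ into $\eta(\mathbf A-\mathbf A_x)\Phi^\ast+\eta(\mathbf A_x-\mathbf A_{x_0})\Phi^\ast$ does not remove this difficulty: the oscillation of the second summand still carries the same unresolved factor, and no argument is given for the claimed absorption. As written, the proposal is therefore incomplete.

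The missing idea is a spatial chaining of ball averages from $x_0$ to the cylinder's center, which is the paper's claim \eqref{eq1307th}. One connects $x_0$ to $\xi$ (or to your $x$) by $N=\lceil 7R/r\rceil\le 8R/r$ points with consecutive distances at most $r$ (see \eqref{eq2202th}); comparing the averages $\bar{\mathbf A}^{\mathsf x}_{x_{i-1},r}$ and $\bar{\mathbf A}^{\mathsf x}_{x_i,r}$ over the overlap $B_r(x_{i-1})\cap B_r(x_i)$ shows each increment contributes at most $c(d)\,\omega_{\mathbf A}^{\mathsf x}(r)$ after time averaging (see \eqref{eq1714th}), while the endpoint at $x_0$ contributes the Dini integral via \eqref{eq12.11b}. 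This gives
\begin{equation*}
\fint_{t-r^2}^{t}\abs{\bar{\mathbf A}^{\mathsf x}_{x,r}(s)-\mathbf A_{x_0}(s)}\,ds
\le C\left(\frac{R}{r}\,\omega_{\mathbf A}^{\mathsf x}(r)+\int_0^r\frac{\omega_{\mathbf A}^{\mathsf x}(s)}{s}\,ds\right),
\end{equation*}
and the apparently dangerous factor $R/r$ is exactly cancelled by the Lipschitz gain $r R^{-d-1}$ coming from $\psi$, producing $CR^{-d}\omega_{\mathbf A}^{\mathsf x}(r)+CR^{-d}(r/R)\int_0^r\omega_{\mathbf A}^{\mathsf x}(s)s^{-1}\,ds$, i.e.\ precisely the stated bound. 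The key realization you are missing is that one should not try to force a clean Dini-integral bound on the comparison with $\mathbf A_{x_0}$ (which is impossible, since $x$ and $x_0$ are a distance $\sim R\gg r$ apart), but instead accept the chained term $(R/r)\,\omega_{\mathbf A}^{\mathsf x}(r)$ and let the prefactor $r/R$ absorb it; inserting this chaining estimate into your $U_2$ term (or into the factor $\mathbf A_x-\mathbf A_{x_0}$ in your variant) completes the proof along the same lines as the paper.
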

\begin{proof}
Let us denote
\[
\Phi_0(X)=\Phi_0(t,x)=\Phi^\ast(t,x,t_0,x_0)=\Phi^\ast(X,X_0).
\]
For $Z=(\tau, \xi) \in Q_{2R}^+(Y_0)$ and $0<r \le R$, we have
\begin{align*}
\fint_{Q_r^-(Z)} \abs{\mathbf g- \bar{\mathbf g}^{\mathsf x}_{\xi, r}}&=\fint_{Q_r^-(Z)} \,\Abs{\eta (\mathbf{A}-\mathbf{A}_{x_0}) \Phi_0 - \overline{(\eta(\mathbf{A}-\mathbf{A}_{x_0}) \Phi_0)}^{\mathsf x}_{\xi, r}}\\
&\le \fint_{Q_r^-(Z)} \,\Abs{(\mathbf{A}-\mathbf{A}_{x_0}) \eta\Phi_0 - \overline{(\mathbf{A}-\mathbf{A}_{x_0})}^{\mathsf x}_{\xi,r} \,\eta \Phi_0}\\
&\qquad \quad+ \fint_{Q_r^-(Z)} \,\Abs{\overline{(\mathbf{A}-\mathbf{A}_{x_0})}^{\mathsf x}_{\xi,r} \,\eta \Phi_0 - \overline{\left((\mathbf{A}-\mathbf A_{x_0}) \eta \Phi_0 \right)}^{\mathsf x}_{\xi,r}}\\
&=:I+II.
\end{align*}
Note that by the triangle inequality, $\abs{X-X_0} \ge 2R$ for any $X \in Q_r^-(Z)$ and thus, we have
\begin{equation}				\label{eq1225th}
\abs{\Phi_0(X)} + R\abs{D_x\Phi_0(X)} \le C R^{-d}, \quad \forall X \in Q_r^-(Z),
\end{equation}
where $C=C(d,\lambda, \Lambda)$.
Here the bound of $D_x\Phi_0(X)$ is due to the fact that $\mathbf{A}_{x_0}$ only depends on $t$.  See \eqref{eq1948sun} and \eqref{eq1638f}.
Therefore, we have
\begin{align}				\nonumber
I &\le \fint_{Q_r^-(Z)} \,\Abs{(\mathbf{A}-\mathbf{A}_{x_0})- \overline{(\mathbf{A}-\mathbf{A}_{x_0})}^{\mathsf x}_{\xi, r}} \abs{\Phi_0}\\
						\label{eq2222th}
&\le \fint_{Q_r^-(Z)} \,C R^{-d} \Abs{\mathbf{A}-\bar{\mathbf A}_{\xi,r}^{\mathsf x}} \le C R^{-d} \omega_{\mathbf A}^{\mathsf x}(r).
\end{align}
Also, we have
\begin{align}				\nonumber
II &= \fint_{Q_r^-(Z)} \,\Abs{\fint_{B_r(\xi)}(\mathbf{A}(t,y)-\mathbf{A}_{x_0}(t)) (\eta(t,x)\Phi_0(t,x)- \eta(t,y)\Phi_0(t,y))\,dy} dxdt \\
						\label{eq2223th}
&\le \fint_{Q_r^-(Z)} \fint_{B_r(\xi)} \abs{\mathbf{A}(t,y)-\mathbf{A}_{x_0}(t)}\, \abs{\eta(t,x)\Phi_0(t,x)- \eta(t,y)\Phi_0(t,y)}\, dy dx dt.
\end{align}
By using \eqref{eq1225th}, and the properties of $\eta$, for $(t,x) \in Q_r^-(Z)$ and $y \in B_r(\xi)$, we have
\begin{align}				\nonumber
&\abs{\eta(t,x)\Phi_0(t, x) - \eta(t,y)\Phi_0(t, y)}\\
						\nonumber
& \le \abs{\eta(t,x)} \,\abs{\Phi_0(t,x)- \Phi_0(t,y)} + \abs{\eta(t,x)- \eta(t,y)} \, \abs{\Phi_0(t,y)}\\
						\label{eq2224th}						
&\le C r R^{-d-1}+C (r/R) R^{-d} \le C r R^{-d-1}.
\end{align}
Plugging \eqref{eq2224th} into \eqref{eq2223th}, we obtain
\[
II \le C r R^{-d-1} \fint_{Q_r^-(Z)}  \abs{\mathbf{A}(t,y)-\mathbf{A}_{x_0}(t)}\, dydt.
\]
We claim that
\begin{equation}			\label{eq1307th}
\fint_{Q_r^-(Z)}  \abs{\mathbf A(t, x)-\mathbf{A}_{x_0}(t)}\, dx dt \le C \left( \frac{R \omega_{\mathbf A}^{\mathsf x}(r)}{r}+\int_0^r \frac{\omega_{\mathbf A}^{\mathsf x}(s)}{s}\,ds\right),
\end{equation}
where $C=C(d, \lambda, \Lambda)$.
Assume the claim for now.
Then, we have
\begin{equation}
						\label{eq2226th}						
II \le C r R^{-d-1} \left( \frac{R \omega_{\mathbf A}^{\mathsf x}(r)}{r}+\int_0^r \frac{\omega_{\mathbf A}^{\mathsf x}(s)}{s}\, ds\right).
\end{equation}
Combining \eqref{eq2222th} and \eqref{eq2226th}, we have (recall $r\le R$)
\begin{equation*}			
\omega_{\mathbf g}^{\mathsf x}(r, Z) \le I+II \le C R^{-d}\left(\omega_{\mathbf A}^{\mathsf x}(r)+ \frac{r}{R} \int_0^r \frac{\omega_{\mathbf A}^{\mathsf x}(s)}{s}\,ds\right).
\end{equation*}
The lemma is proved by taking supremum over $Z \in Q_{2R}^+(Y_0)$.

It remains to prove the claim \eqref{eq1307th}.
Note that we can choose a sequence of points $x_1$, $x_2$, $\ldots$, $x_N$ in $\bR^d$ with $x_N=\xi$ so that $\abs{x_{i-1}-x_i} \le r$ for $i=1,\ldots, N$ and
\begin{equation}				\label{eq2202th}
N= \lceil 7R/r \rceil  \le 8R/r.
\end{equation}
Then by using the triangle inequality, we have
\begin{equation}			\label{eq1712th}
\abs{\mathbf A(t,x)-\mathbf A_{x_0}(t)} \le \abs{\mathbf{A}(t,x)-\bar{\mathbf A}_{\xi,r}^{\mathsf x}(t)} + \sum_{i=1}^N \,\abs{\bar{\mathbf A}_{x_i,r}^{\mathsf x}(t)-\bar{\mathbf A}_{x_{i-1},r}^{\mathsf x}(t)} + \abs{\bar{\mathbf A}_{x_0,r}^{\mathsf x}(t)-\mathbf A_{x_0}(t)}.
\end{equation}
Note that by \eqref{eq12.11b}, we have
\begin{equation}			\label{eq1713th}
\fint_{\tau-r^2}^\tau \abs{\bar{\mathbf A}_{x_0,r}^{\mathsf x}(t)-\mathbf{A}_{x_0}(t)}\,dt \le c(d) \int_0^r\frac{\omega_{\mathbf A}^{\mathsf x}(s)}{s}\,ds.
\end{equation}
Also, by averaging the following triangle inequality
\[
\abs{\bar{\mathbf A}_{x_i,r}^{\mathsf x}(t)-\bar{\mathbf A}_{x_{i-1},r}^{\mathsf x}(t)} \le \abs{ \mathbf A(t,x)-\bar{\mathbf A}_{x_{i},r}^{\mathsf x}(t)} + \abs{\mathbf A(t,x)-\bar{\mathbf A}_{x_{i-1},r}^{\mathsf x}(t)}
\]
over $x \in B_r(x_{i-1})\cap B_r(x_i)$ and using $\abs{x_{i-1}-x_i} \le r$, we find that
\[
\abs{\bar{\mathbf A}_{x_i,r}^{\mathsf x}(t)-\bar{\mathbf A}_{x_{i-1},r}^{\mathsf x}(t)} \le c(d) \left( \fint_{B_r(x_{i})}  \abs{ \mathbf A(t,x)-\bar{\mathbf A}_{x_{i},r}^{\mathsf x}(t)}\,dx + \fint_{B_r(x_{i-1})}  \abs{\mathbf A(t,x)-\bar{\mathbf A}_{x_{i-1},r}^{\mathsf x}(t)}\,dx\right).
\]
Then, by averaging the last inequality over $t \in (\tau-r^2, \tau)$, we get
\begin{equation}			\label{eq1714th}
\fint_{\tau-r^2}^\tau \abs{\bar{\mathbf A}_{x_i,r}^{\mathsf x}(t)-\bar{\mathbf A}_{x_{i-1},r}^{\mathsf x}(t)} \,dt \le c(d) \omega_{\mathbf A}^{\mathsf x}(r),  \quad i=1,\ldots, N.
\end{equation}
Finally, averaging the inequality \eqref{eq1712th} over $X=(t,x) \in Q_r^-(Z)$ and using \eqref{eq1713th}, \eqref{eq1714th}, and \eqref{eq2202th}, we obtain
\[
\fint_{Q_r^-(Z)}  \abs{\mathbf A(t,x)-\mathbf A_{x_0}(t)}\,dxdt \le \omega_{\mathbf A}^{\mathsf x}(r)+ c(d) \frac{8R}{r} \omega_{\mathbf A}^{\mathsf x}(r) + c(d) \int_0^r \frac{\omega_{\mathbf A}^{\mathsf x}(s)}{s}\,ds,
\]
from which \eqref{eq1307th} follows.
\end{proof}

Applying Lemma~\ref{lem3.13} with $\eta=\zeta$ and $\eta=1-\zeta$, respectively, we get
\begin{align}				
						\nonumber
\int_0^{R} \frac{\omega_{\mathbf g_i}^{\mathsf x}(r, Q_{2R}^+(Y_0))}{r}\,dr &\le C R^{-d} \left(\int_0^{R} \frac{\omega_{\mathbf A}^{\textsf x}(r)}{r}\,dr+\frac{1}{R} \int_0^{R}\int_0^r \frac{\omega_{\mathbf A}^{\textsf x}(s)}{s}\,ds\,dr \right)\\
						\label{eq0746f}
&\le C R^{-d} \int_0^R \frac{\omega_{\mathbf A}^{\textsf x}(s)}{s}\,ds.
\end{align}
Putting \eqref{eq0746f} back to \eqref{eq2017th} together with \eqref{eq2018th}, we get
\begin{align}
						\nonumber
\abs{v_1(Y_0)} +\abs{v_2(Y_0)} &\le  C R^{-d}\left(1+ \left(\int_0^{2R} \frac{\omega_{\mathbf A}^{\mathsf x}(s)}{s}\,ds\right)^{1/p_2}+\int_0^{R} \frac{\omega_{\mathbf A}^{\mathsf x}(s)}{s}\,ds \right)\\
						\label{eq0819f}
&\le C \left(1+ \int_0^{R_0} \frac{\omega_{\mathbf A}^{\mathsf x}(s)}{s}\,ds \right) R^{-d} \le C R^{-d}.
\end{align}
Therefore, by using \eqref{eq0819f} and recalling that $v=v_1+v_2$ and $R=\frac15 \abs{X_0-Y_0}$, we have
\begin{equation}			\label{eq0824f}
\abs{v(Y_0)} \le C \abs{X_0-Y_0}^{-d},
\end{equation}
where $C=C(d,\lambda, \Lambda, \omega_{\mathbf A}^{\mathsf x}, R_0)$.
Since
\[
\Gamma^\ast(Y_0,X_0)=\Phi^\ast(Y_0,X_0)+v(Y_0)
\]
and $Y_0$ satisfies $0<\abs{Y_0-X_0}<R_0$, the estimate \eqref{eq1722sun} follows from \eqref{eq0824f} and \eqref{eq1948sun}.

\subsection{Construction of fundamental solution and the symmetry relation}	\label{sec3.3}
We shall prove that the function $\Gamma(t,x,s,y)$ given by the formula \eqref{symmetry} is the fundamental solution for the operator $P$.

For $Y =(s, y) \in \bR^{d+1}$ and $\varepsilon>0$, we first construct the approximate fundamental solution $\Gamma_\varepsilon(\cdot, Y)$ by following the strategy in \cite{CDK08}.
Let $u=\Gamma_\varepsilon(\cdot, Y)$ be the solution of the problem
\begin{equation}				\label{eq1655th}
P u= \frac{1}{\abs{Q_\varepsilon^-(Y)}} \, \chi_{Q_\varepsilon^-(Y)} \;\text{ in }\; (s-\varepsilon^2,s+T)\times\bR^d,\quad
u(s-\varepsilon^2,\cdot)=0 \;\text{ on }\;\bR^d,
\end{equation}
where $T\ge 1$ is fixed but arbitrary.
By setting $\Gamma_\varepsilon(\cdot, Y)=0$ on $(-\infty, s-\varepsilon^2)\times\bR^d$ and letting $T \to \infty$, we extend the domain of $\Gamma_\varepsilon(\cdot, Y)$ to the entire $\bR^{d+1}$.

Then by Proposition~\ref{prop3.5}, we have
\begin{equation}				\label{eq1709th}
\Gamma_\varepsilon(X, Y)= \fint_{Q_\varepsilon^-(Y)} \Gamma^\ast(Z, X)\,dZ.
\end{equation}
We conclude from \eqref{eq1709th} and \eqref{eq1722sun} that for any $X$, $Y \in \bR^{d+1}$ with $0<\abs{X-Y}<R_0$, we have
\begin{equation*}				
\abs{\Gamma_\varepsilon(X,Y)} \le C \abs{X-Y}^{-d}, \quad
\forall  \varepsilon\in \big(0,  \tfrac15 \abs{X-Y}\big),
\end{equation*}
where $C$ is a constant depending only on $d$, $\lambda$, $\Lambda$, $\omega_{\mathbf A}^{\mathsf x}$, and $R_0$.

We construct fundamental solution for the operator $P$ by modifying the method in \cite{CDK08}.
Let $Y=(s,y) \in \bR^{d+1}$ be fixed.
For any $T \ge 1$, let us denote
\[
R_{T}^{d+1}= (s-T,s+T)\times \bR^d.
\]
The following two lemmas are the adaptation of Lemmas 2.13 and 2.19 in \cite{HK20} to the parabolic setting.
\begin{lemma}				\label{lem2.17}
Let $p\in (1,\infty)$.
For any $\varepsilon\in (0,1)$, we have
\begin{align}
						\label{eq1709w}
&\int_{\bR^{d+1}_T \setminus \overline Q_r(Y)} \abs{\Gamma_\varepsilon(t,x,s,y)}^{p} \,dx dt \le C r^{-pd+d+2}, \quad \forall \, r>0\quad \text{when}\ p>(d+2)/d, \\
						\label{eq1420th}
&\int_{\bR^{d+1}_T \setminus \overline Q_r(Y)}  \abs{\partial_t \Gamma_\varepsilon(t,x,s,y)}^{p} + \abs{D_x^2 \Gamma_\varepsilon(t,x,s,y)}^{p} \,dx dt \le C r^{-(d+2)(p-1)}, \quad \forall \, r>0,
\end{align}
where $C=C(d, \lambda, \Lambda, p, T, \omega_{\mathbf A}^{\mathsf x})$.
\end{lemma}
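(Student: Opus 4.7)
My plan is a two-step argument: first establish the pointwise bound $|\Gamma_\varepsilon(X,Y)|\lesssim\max(|X-Y|,\varepsilon)^{-d}$ from the adjoint bound \eqref{eq1722sun}, then handle the two claimed estimates separately---the first by direct integration in parabolic coordinates, the second via parabolic $W^{1,2}_p$ interior regularity on dyadic annuli centered at $Y$.

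For the pointwise bound I would start from the representation \eqref{eq1709th} and apply \eqref{eq1722sun}. When $|X-Y|\ge 2\varepsilon$ and $Z\in Q_\varepsilon^-(Y)$, one has $|Z-X|\ge\tfrac12|X-Y|$, so $|\Gamma^\ast(Z,X)|\lesssim|X-Y|^{-d}$ uniformly in $Z$; when $|X-Y|<2\varepsilon$, the elementary inequality $\fint_{Q_\varepsilon^-(Y)}|Z-X|^{-d}\,dZ\lesssim\varepsilon^{-d}$ takes over. Together these yield $|\Gamma_\varepsilon(X,Y)|\le C\max(|X-Y|,\varepsilon)^{-d}$ for $|X-Y|<R_0$, with $R_0$ chosen depending on $T$. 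The first estimate then follows by integration:
\[
\int_{\bR^{d+1}_T\setminus\overline Q_r(Y)}|\Gamma_\varepsilon|^{p}\,dX\le C\int_{|X-Y|>r,\,|t-s|<T}\max(|X-Y|,\varepsilon)^{-dp}\,dX,
\]
which for $r\ge 2\varepsilon$ is $\lesssim r^{d+2-dp}$ by a standard parabolic-coordinate computation (convergent precisely because $p>(d+2)/d$), and for $r<2\varepsilon$ is majorized by $\varepsilon^{d+2-dp}\le C r^{d+2-dp}$ since $d+2-dp<0$.

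For the second estimate, valid for every $p\in(1,\infty)$, I would decompose $\bR^{d+1}_T\setminus\overline Q_r(Y)$ into dyadic parabolic annuli $A_k=Q_{2^{k+1}r}(Y)\setminus\overline Q_{2^kr}(Y)$. Assuming $\varepsilon\le r$, the function $\Gamma_\varepsilon(\cdot,Y)$ satisfies $P\Gamma_\varepsilon=0$ on a slight enlargement $\widetilde A_k$, so a scaled parabolic $W^{1,2}_p$ interior estimate (cf.~\cite{Kr07,EM2016}) gives
\[
\|\partial_t\Gamma_\varepsilon\|_{L_p(A_k)}+\|D_x^{2}\Gamma_\varepsilon\|_{L_p(A_k)}\lesssim(2^kr)^{-2}\|\Gamma_\varepsilon\|_{L_p(\widetilde A_k)}.
\]
Inserting $\|\Gamma_\varepsilon\|_{L_p(\widetilde A_k)}\lesssim(2^kr)^{(d+2)/p-d}$ from the pointwise bound, raising to the $p$-th power, and summing the resulting geometric series (convergent whenever $p>1$) yields the claimed $C r^{-(d+2)(p-1)}$. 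The case $\varepsilon>r$ is settled at once by the global $W^{1,2}_p$ estimate $\|\partial_t\Gamma_\varepsilon\|_{L_p}^{p}+\|D^{2}_x\Gamma_\varepsilon\|_{L_p}^{p}\lesssim\varepsilon^{-(d+2)(p-1)}\le r^{-(d+2)(p-1)}$.

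The main obstacle will be controlling the far-field behavior of $\Gamma_\varepsilon$ for $|x-y|\gg R_0$, since \eqref{eq1722sun} is only local in $|X-Y|$ with constants that deteriorate as $R_0$ grows. Because the sub-Gaussian and Gaussian estimates of Theorems \ref{thm2}--\ref{thm3} are not yet at our disposal, this tail decay must be produced from scratch by a parabolic Caccioppoli/cutoff argument exploiting $P\Gamma_\varepsilon=0$ far from $Y$, and then absorbed into the $T$-dependent constant in the final bound.
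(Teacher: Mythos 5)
Your approach diverges from the paper's and has a genuine gap exactly where you flag it: the far field. The domain of integration $\bR^{d+1}_T\setminus\overline Q_r(Y)=(s-T,s+T)\times\bR^d\setminus\overline Q_r(Y)$ is spatially unbounded, while the only pointwise input you have, \eqref{eq1722sun} transferred to $\Gamma_\varepsilon$ via \eqref{eq1709th}, is restricted to $|X-Y|<R_0$ with a constant $C(R_0)$ whose growth in $R_0$ is not tracked anywhere (and cannot be absorbed by choosing $R_0$ large, since $|x-y|$ ranges over all of $\bR^d$ even though $|t-s|<T$). So both your direct integration for \eqref{eq1709w} and your dyadic-annuli argument for \eqref{eq1420th} are only justified on the bounded part of the region; the infinitely many annuli with $2^kr\gg R_0$ are uncontrolled. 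The fix you sketch (``Caccioppoli/cutoff exploiting $P\Gamma_\varepsilon=0$ far from $Y$'') is not a routine detail: an energy argument gives $L_2$-type information, whereas \eqref{eq1420th} is needed for all $p\in(1,\infty)$, and the bound must be \emph{uniform in} $\varepsilon$ (this is the whole point of the lemma, so the trivial global estimate $\lesssim\varepsilon^{-(d+2)(p-1)}$ is useless in the far field). What would actually be required is something like the weighted $W^{1,2}_p$ iteration over annuli that the paper only develops later, in Section~\ref{sec4}, for $\Gamma$ itself --- i.e.\ the missing piece is comparable in substance to the rest of your proof, so as written the argument is incomplete.

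For contrast, the paper proves the lemma by duality and never needs any spatial decay of $\Gamma_\varepsilon$: for $\mathbf g$ (resp.\ $f$) smooth, compactly supported in $\bR^{d+1}_T\setminus\overline Q_r(Y)$, one solves the adjoint problem $P^\ast u=\dv^2\mathbf g$ (resp.\ $P^\ast u=f$) backward from $t=s+T$, uses the defining identity \eqref{eq13.53pde} with $v=\Gamma_\varepsilon(\cdot,Y)$ to write $\fint_{Q_\varepsilon^-(Y)}u$ as the pairing of $\mathbf g$ with $D_x^2\Gamma_\varepsilon$ (resp.\ of $f$ with $\Gamma_\varepsilon$), and then bounds $\norm{u}_{L_\infty(Q_\varepsilon^-(Y))}$ by $Cr^{-(d+2)/q}\norm{\mathbf g}_{L_q}$ (resp.\ $Cr^{-d+(d+2)/p}\norm{f}_{L_q}$) via the local boundedness of adjoint solutions (Lemma~\ref{lem02}), the global $L_q$ solvability (Lemma~\ref{lem01}), and, for \eqref{eq1709w}, a Sobolev-embedding step through comparison with the heat operator; dualizing gives both estimates, with the case $r\le 4\varepsilon$ handled by the global $W^{1,2}_p$ bound as you do. If you want to keep your route, you must first establish, uniformly in $\varepsilon$, a decay estimate for $\Gamma_\varepsilon(t,\cdot,s,y)$ as $|x-y|\to\infty$ (e.g.\ by adapting the $\mu$-weighted $W^{1,2}_p$ iteration of Section~\ref{sec4.1} to $\Gamma_\varepsilon$); otherwise the duality argument is the shorter and self-contained path.
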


\begin{proof}
We first establish \eqref{eq1420th}.
It is enough to consider the case when $r>4\varepsilon$.
Indeed, if $r \le 4\varepsilon$, then by \eqref{eq1825w}, we have
\begin{equation*}				
\int_{\bR^{d+1}_T} \abs{\partial_t \Gamma_\varepsilon(X, Y)}^{p} +\abs{D^2 \Gamma_\varepsilon(X, Y)}^{p} \,dX \le C \varepsilon^{-(d+2)(p-1)} \le C r^{-(d+2)(p-1)}.
\end{equation*}
For $\mathbf{g} \in C^\infty_c(\bR^{d+1}_T \setminus \overline Q_r(Y))$, let $u \in L_{q}(\bR^{d+1}_T)$ be the solution of the problem
\begin{equation*}
P^\ast u=\dv^2\mathbf g\; \text{ in }\; \bR^{d+1}_T, \quad u(s+T,\cdot)=0\; \text{on}\;  \bR^d,
\end{equation*}
where $q=p/(p-1)$.
Then by \eqref{eq13.53pde} we have
\begin{equation}				\label{eq1536fr}
\fint_{Q_\varepsilon^-(Y)} u = \int_{\bR^{d+1}_T} \tr(\mathbf{g} D^2 \Gamma_\varepsilon(\cdot, Y)).
\end{equation}

Since $\mathbf{g} = 0$ in $Q_r(Y)$, we see that  $u$ is continuous on $\overline Q_{r/2}(Y)$ by \cite[Theorem~3.3]{DEK21}.
Note that if $Z \in Q_\varepsilon^-(Y)$, then $Q_{r/2}^+(Z) \subset Q_{r}(Y)$.
It follows from Lemma \ref{lem02} that
\begin{equation}			\label{eq1720m}
\norm{u}_{L_\infty(Q_{r/4}^+(Z))} \le C r^{-d-2} \norm{u}_{L_1(Q_{r/2}^+(Z))} \le C r^{-d-2} \norm{u}_{L_1(Q_r(Y))}.
\end{equation}
Therefore, by H\"older's inequality and Lemma~\ref{lem01}, we have
\[
\norm{u}_{L_\infty(Q_{\varepsilon}^-(Y))} \le C r^{-\frac{d+2}{q}} \norm{u}_{L_{q}(Q_r(Y))} \le C r^{-\frac{d+2}{q}} \norm{u}_{L_{q}(\bR^{d+1}_T)} \le C r^{-\frac{d+2}{q}} \norm{\mathbf g}_{L_{q}(\bR^{d+1}_T)}.
\]
Since $\mathbf g$ is supported in $\bR^{d+1}_T \setminus \overline Q_r(Y)$, by \eqref{eq1536fr} and the above estimate, we have
\[
\Abs{\int_{\bR^{d+1}_T \setminus \overline Q_r(Y)} \tr(\mathbf{g} D^2 \Gamma_\varepsilon(\cdot, Y))} \le C r^{-\frac{d+2}{q}} \norm{\mathbf g}_{L_{q}(\bR^{d+1}_T \setminus \overline Q_r(Y))}.
\]
Therefore, by duality, we have
\[
\int_{\bR^{d+1}_T \setminus \overline Q_r(Y)}  \abs{D_x^2 \Gamma_\varepsilon(t,x,s,y)}^{p} \,dx dt \le C r^{-(d+2)(p-1)}.
\]
Then the estimate \eqref{eq1420th} follows from the last inequality and the fact that $P\,  \Gamma_\varepsilon(\cdot,Y)= 0$ in $\bR^{d+1}\setminus \overline Q_r(Y)$.

Next, we turn to the proof of \eqref{eq1709w}.
Again, it is enough to consider the case when $r>4\varepsilon$ because by \eqref{eq1825w} and the parabolic Sobolev embedding, we have
\begin{equation*}				
\norm{\Gamma_\varepsilon(\cdot, Y)}_{L_{p}(\bR^{d+1}_T)}  \le C \norm{\Gamma_\varepsilon(\cdot, Y)}_{W^{1,2}_{p(d+2)/(d+2+2p)}(\bR^{d+1}_T)} \le  C \varepsilon^{-d+(d+2)/p} \le C r^{-d+(d+2)/p},
\end{equation*}
where in the last inequality we used the fact that $-d+(d+2)/p<0$.
For $f \in C^\infty_c(\bR^{d+1}_T \setminus \overline Q_r(Y))$, let $u \in L_q(\bR^{d+1}_T)$ be the solution of the problem
\begin{equation*}
P^\ast u=f\; \text{ in }\; \bR^{d+1}_T, \quad u(s+T,\cdot)=0\; \text{on}\;  \bR^d.
\end{equation*}
Then by \eqref{eq13.53pde} we have
\begin{equation}				\label{eq2116w}
\fint_{Q_\varepsilon^-(Y)} u = \int_{\bR^{d+1}_T} f \Gamma_\varepsilon(\cdot, Y).
\end{equation}
Similar to \eqref{eq1720m}, for $Z\in Q_\varepsilon^-(Y)$, we have
\begin{equation}			\label{eq1721m}
\norm{u}_{L_\infty(Q_{r/4}^+(Z))} \le C r^{-d-2} \norm{u}_{L_1(Q_r(Y))}.
\end{equation}
Let $v$ be the solution of
\begin{equation*}
-\partial_t v-\Delta v =f\; \text{ in }\; \bR^{d+1}_T, \quad v(s+T,\cdot)=0\; \text{on}\;  \bR^d.
\end{equation*}
By the $L_p$ estimates (cf. \eqref{eq1825w}) and the parabolic Sobolev embedding, we have
\begin{equation*}			
\norm{v}_{L_{q(d+2)/(d+2-q)}(\bR^{d+1}_T)} \le C \norm{v}_{W^{1,2}_{q}(\bR^{d+1}_T)} \le C \norm{f}_{L_{q}(\bR^{d+1}_T)}.
\end{equation*}
Note that $w=u-v$ satisfies
\begin{equation*}
P^\ast w=-\dv^2((\mathbf A-\mathbf I)v)\; \text{ in }\; \bR^{d+1}_T, \quad w(s+T,\cdot)=0\; \text{on}\;  \bR^d.
\end{equation*}
Therefore, by Lemma~\ref{lem01} and the last inequality, we have
\begin{equation*}			
\norm{w}_{L_{q(d+2)/(d+2-q)}(\bR^{d+1}_T)}
\le C \norm{\mathbf A-\mathbf I}_\infty \norm{v}_{L_{q(d+2)/(d+2-q)}(\bR^{d+1}_T)} \le C \norm{f}_{L_{q}(\bR^{d+1}_T)},
\end{equation*}
which in turn implies that
\begin{equation*}			
\norm{u}_{L_{q(d+2)/(d+2-q)}(\bR^{d+1}_T)} \le \norm{v}_{L_{q(d+2)/(d+2-q)}(\bR^{d+1}_T)} + \norm{w}_{L_{q(d+2)/(d+2-q)}(\bR^{d+1}_T)}  \le C \norm{f}_{L_{q}(\bR^{d+1}_T)}.
\end{equation*}
Then by \eqref{eq1721m} and H\"older's inequality, we have
\[
\norm{u}_{L_\infty(Q_{\varepsilon}^-(Y))} \le C r^{-d+(d+2)/p} \norm{u}_{L_{q(d+2)/(d+2-2q)}(Q_r(Y))} \le C r^{-d+(d+2)/p} \norm{f}_{L_{q}(\bR^{d+1}_T)}.
\]
Therefore, it follows from \eqref{eq2116w} and the assumption that $f=0$ in $Q_r(Y)$, that
\[
\Abs{\int_{\bR^{d+1}_T \setminus \overline Q_r(Y)} f \Gamma_\varepsilon(\cdot, Y)} \le C r^{-d+(d+2)/p} \norm{f}_{L_{q}(\bR^{d+1}_T\setminus \overline Q_r(Y))}.
\]
Again, we obtain \eqref{eq1709w} from the last inequality by duality.
\end{proof}

\begin{lemma}					\label{lem2.13}
For any $\varepsilon\in (0,1)$, we have
\begin{align*}
\Abs{\Set{(t,x) \in \bR^{d+1}_T : \abs{\Gamma_\varepsilon(t,x,s,y)}>\alpha}} &\le C \alpha^{-\frac{d+2}{d}}, \quad \forall \, \alpha>0, \\
\Abs{\Set{(t,x) \in \bR^{d+1}_T : \abs{\partial_t  \Gamma_\varepsilon(t,x,s,y)}+ \abs{D^2_x  \Gamma_\varepsilon(t,x,s,y)}>\alpha}} &\le C \alpha^{-1},  \quad \forall \, \alpha>0,
\end{align*}
where $C=C(d, \lambda, \Lambda, T, \omega_{\mathbf A}^{\mathsf x})$.
\end{lemma}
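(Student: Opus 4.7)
The plan is to prove both weak-type estimates by the classical level-set splitting trick, using Lemma~\ref{lem2.17} as the main input. Specifically, for any $r>0$ I would decompose
\[
\set{(t,x)\in \bR^{d+1}_T : \abs{\Gamma_\varepsilon(t,x,s,y)}>\alpha}
\;\subset\; Q_r(Y) \,\cup\, \bigl(\set{\abs{\Gamma_\varepsilon(\cdot,Y)}>\alpha}\setminus \overline Q_r(Y)\bigr),
\]
and similarly for the derivatives. The first piece contributes at most $\abs{Q_r(Y)}\le C r^{d+2}$. For the second piece I would apply Chebyshev's inequality with an appropriate exponent $p$ and use the tail bounds from Lemma~\ref{lem2.17}.

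For the $\Gamma_\varepsilon$ estimate, fixing any $p>(d+2)/d$, Chebyshev together with \eqref{eq1709w} yields
\[
\bigabs{\set{\abs{\Gamma_\varepsilon(\cdot,Y)}>\alpha}\setminus \overline Q_r(Y)}
\le \alpha^{-p}\!\!\int_{\bR^{d+1}_T\setminus \overline Q_r(Y)}\!\!\abs{\Gamma_\varepsilon}^p
\le C\alpha^{-p} r^{-pd+d+2}.
\]
Adding the two contributions gives $Cr^{d+2}+C\alpha^{-p}r^{-pd+d+2}$, and I would optimize by choosing $r=\alpha^{-1/d}$, which balances the two terms (a direct computation shows each equals a constant multiple of $\alpha^{-(d+2)/d}$). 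For the $\partial_t\Gamma_\varepsilon,D_x^2\Gamma_\varepsilon$ estimate, I would use the same split with \eqref{eq1420th}, fix any $p>1$, and obtain $Cr^{d+2}+C\alpha^{-p}r^{-(d+2)(p-1)}$; here the balancing choice is $r=\alpha^{-1/(d+2)}$, and both terms become a constant multiple of $\alpha^{-1}$.

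One small bookkeeping point: the tail bounds in Lemma~\ref{lem2.17} hold for every $r>0$ (the lemma was established in both regimes $r>4\varepsilon$ and $r\le 4\varepsilon$), so no separate treatment is needed when $\alpha$ is small enough that the optimizing $r$ falls below $4\varepsilon$. Apart from this check, the argument is purely mechanical once Lemma~\ref{lem2.17} is in hand, so I do not anticipate any genuine obstacle; the only minor subtlety is selecting $p$ in the admissible range so that the exponent on $r$ in Chebyshev's inequality produces the desired exponent on $\alpha$ after optimization, but the balancing choice $r=\alpha^{-1/d}$ (resp.\ $\alpha^{-1/(d+2)}$) works uniformly for every admissible $p$.
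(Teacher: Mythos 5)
Your proposal is correct and follows essentially the same route as the paper: the paper's proof simply cites \eqref{eq1709w} and \eqref{eq1420th} and refers to the proof of Lemma~3.4 in \cite{CDK08}, which is exactly the splitting into $Q_r(Y)$ plus a Chebyshev estimate on the complement with the optimizing choices $r=\alpha^{-1/d}$ and $r=\alpha^{-1/(d+2)}$ that you spell out. Your bookkeeping remark that Lemma~\ref{lem2.17} holds for all $r>0$ (so no case distinction at $r\le 4\varepsilon$ is needed) is also accurate.
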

\begin{proof}
These follow from \eqref{eq1709w} and \eqref{eq1420th},  respectively.
See the proof of \cite[Lemma~3.4]{CDK08}.
\end{proof}

With Lemmas \ref{lem2.17}  and \ref{lem2.13} available, one can modify the argument of \cite{CDK08}  to construct the fundamental solution $\Gamma(X,Y)$ for the operator $P$ out of the family $\set{\Gamma_\varepsilon(X,Y)}$.
We claim that for any $p \in (1,\infty)$, $r>0$, and $T \ge 1$, we have
\begin{equation}
                \label{eq3.00}
\sup_{0<\varepsilon < 1} \norm{\Gamma_\varepsilon(\cdot, Y)}_{W^{1,2}_p(\bR^{d+1}_T \setminus \overline Q_r(Y))} <+\infty.
\end{equation}
Indeed, by using the fact that $\Gamma_\varepsilon(s-T,\cdot)\equiv 0$, it follows from the Poincar\'e inequality and Lemma~\ref{lem2.17} that
\[
\int_{s-T}^{s+T}\!\!\!\int_{\bR^d \setminus B_r(y)}
\abs{\Gamma_\varepsilon(t,x,s,y)}^p\,dxdt
\le C  \int_{s-T}^{s+T}\!\!\!\int_{\bR^d \setminus B_r(y)}\abs{\partial_t\Gamma_\varepsilon(t,x,s,y)}^p\,dx dt \le C,
\]
where $C$ is a constant that depends on the parameters including $p$, $r$, and $T$ but is independent of $\epsilon$.
Then, by the interpolation inequality, we have
\begin{align*}
\int_{s-T}^{s+T}\!\!\!\int_{\bR^d \setminus B_r(y)}&
 \abs{D_x\Gamma_\varepsilon(t,x,s,y)}^p \,dxdt \\
&\le C  \int_{s-T}^{s+T}\!\!\!\int_{\bR^d \setminus B_r(y)}  \abs{\Gamma_\varepsilon(t,x,s,y)}^p + \abs{D_x^2\Gamma_\varepsilon(t,x,s,y)}^p\,dxdt \le C.
\end{align*}
Let $\eta=\eta(x)$ be a smooth function such that
\[
0\leq \eta \leq 1, \quad \eta=1 \;\text{ in }\;B_r(y),\quad \eta=0\;\text{ in }\bR^{d}\setminus B_{2r}(y),\quad \abs{D \eta} \le 2/r.
\]
We apply the Poincar\'e inequality in the space variable to $\eta D_x\Gamma(\cdot,\cdot, s,y)$ on $I \times B_{2r}(y)$ for $I=(s-T, s-r^2)$ and $I=(s+r^2, s+T)$, separately, to get
\begin{align*}
\int_{I} \int_{B_r(y)} \abs{D_x\Gamma_\varepsilon(t,x,s,y)}^p\,dxdt
&\le C \int_I \int_{B_{2r}(y)} \abs{D_x^2 \Gamma_\varepsilon(t,x,s,y)}^p\,dxdt \\
&\quad +C  r^{-p} \int_I \int_{B_{2r}(y)\setminus B_r(y)} \abs{D_x\Gamma_\varepsilon(t,x,s,y)}^p\,dxdt \le C,
\end{align*}
and similarly with $\eta \Gamma(\cdot,\cdot, s,y)$ in place of $\eta D_x\Gamma(\cdot,\cdot, s,y)$, we get
\begin{align*}
\int_{I} \int_{B_r(y)} \abs{\Gamma_\varepsilon(t,x,s,y)}^p\,dxdt
&\le C \int_I \int_{B_{2r}(y)} \abs{D_x \Gamma_\varepsilon(t,x,s,y)}^p\,dxdt \\
&\quad +C  r^{-p} \int_I \int_{B_{2r}(y)\setminus B_r(y)} \abs{\Gamma_\varepsilon(t,x,s,y)}^p\,dxdt \le C.
\end{align*}
Combining these together, we obtain \eqref{eq3.00}.
Therefore, by applying a diagonalization process, we see that there exists a sequence of positive numbers $\set{\varepsilon_i}_{i=1}^\infty$ with $\lim_{i\to \infty} \varepsilon_i=0$ and a function $\Gamma(\cdot, Y)$ on $\bR^{d+1}\setminus \set{Y}$, which belongs to $W^{1,2}_{2}(\bR^{d+1}_T \setminus \overline Q_r(Y))$ for any $T\ge 1$ and $r>0$, such that
\begin{equation}				\label{eq1640m}
\Gamma_{\varepsilon_i}(\cdot, Y) \rightharpoonup \Gamma(\cdot, Y) \;\text{ weakly in } W^{1,2}_{2}(\bR^{d+1}_T\setminus \overline Q_r(Y)).
\end{equation}
On the other hand,  Lemma~\ref{lem2.13} implies that for $1<p<\frac{d+2}{d}$, we have
\[
\sup_{0<\varepsilon <1} \norm{\Gamma_\varepsilon(\cdot, Y)}_{L_p(\overline Q_r(Y))} <+\infty,
\]
which together with \eqref{eq3.00} implies that
\[
\sup_{0<\varepsilon <1} \norm{\Gamma_\varepsilon(\cdot, Y)}_{L_p(\bR^{d+1}_T)} <+\infty,
\]
Therefore, by passing to a subsequence if necessary, we see that
\[
\Gamma_{\varepsilon_i}(\cdot, Y) \rightharpoonup \Gamma(\cdot, Y)\;\text{ weakly in } \; L_p(\bR^{d+1}_T),\quad \forall\, p \in (1, \tfrac{d+2}{d}).
\]
Finally, from \eqref{eq1640m} and \eqref{eq1655th}, we find that $\Gamma(\cdot, Y)$ belongs to $W^{1,2}_2(\bR^{d+1}_T \setminus \overline Q_r(Y))$ and satisfies $P \,\Gamma(\cdot, Y) =0$ in $\bR^{d+1}_T \setminus \overline Q_r(Y)$.
Since we assume that $\mathbf A$ belongs to  $\mathsf{DMO_x} \subset \mathsf{VMO_x}$, we see that for any $r>0$, $\Gamma_\varepsilon(\cdot, Y)$ is locally uniformly continuous in $\bR^{d+1} \setminus Q_r(Y)$ for sufficiently small $\varepsilon$'s, with a uniform modulus of continuity.
Thus, by the Arzela-Ascoli theorem and passing to another subsequence if necessary, we see that
\[
\Gamma_{\varepsilon_i}(\cdot, Y) \to \Gamma(\cdot, Y)\;\text{ locally uniformly on }\; \bR^{d+1} \setminus  Q_r(Y),\quad  \forall\, r>0 .
\]

Recall that $\Gamma^\ast(\cdot, X)$ satisfies
\[
P^\ast \Gamma^\ast(\cdot, X)=0\;\text{ in }\;\bR^{d+1}\setminus Q_r(X)\;\text{ for any }\;r>0,
\]
and thus by \cite[Theorem 3.3]{DEK21}, we see that $\Gamma^\ast(\cdot, X)$ is continuous in $\bR^{d+1}\setminus \set{X}$.
Therefore, we obtain the identity \eqref{symmetry} by taking limit $\varepsilon \to 0$ in \eqref{eq1709th}.

Note that we have just shown that $\Gamma(X, Y)$ is continuous in $\bR^{d+1}\times \bR^{d+1}$ away from the diagonal $\set{(X,X): X \in \bR^{d+1}}$.
The property that $\Gamma(t,x,s,y)=0$ for $t<s$ follows from the fact that $\Gamma_\varepsilon(t,x,s,y)=0$ if $t \le s-\varepsilon^2$.
Also, it follows from \cite[Theorem~3.2]{DEK21} that $D^2_x\Gamma(\cdot, Y)$ is continuous in $\bR^{d+1} \setminus \set{Y}$ and that $\partial_t\Gamma(\cdot, Y)$ is continuous in $\bR^{d+1} \setminus \set{Y}$ if $\mathbf{A}$ is continuous.
We obtain \eqref{eq2232w} immediately from \eqref{eq1722sun}.\qed

\section{Proof of Theorem~\ref{thm2}}			\label{sec4}
For the sake of simplicity, let us assume that $Y=0$ and $T=1$.
Also, let us denote
\begin{equation*}				
u(t, x)=\Gamma(t,x, 0,0).
\end{equation*}
In Section~\ref{sec4.1}, we first show that $u(t,x)$ has the exponential decay
\[
\abs{u(t,x)} \le C_0 t^{-d/2} \exp(-\kappa_0 \abs{x}/\sqrt{t})
\]
for some $\kappa_0>0$ and $C_0>1$.
Then in Section~\ref{sec4.2}, by using the semigroup property
\begin{equation}				\label{eq1549th}
\Gamma(t,x,s,y)=\int_{\bR^d} \Gamma(t,x, \tau, \xi) \Gamma(\tau, \xi, s,y) \,d\xi,\quad \text{for }\;s<\tau<t,
\end{equation}
iteratively with appropriately chosen time steps, we establish the almost Gaussian estimate \eqref{eq1742sun}.
\subsection{Exponential decay of the fundamental solution}	\label{sec4.1}
For $k=1, 2,\ldots$, let $\eta_k=\eta_k(x)$ be a smooth function in $\bR^d$ such that
\[
\eta_k=0\;\text{ in }\;B_k(0),\quad \eta_k=1\;\text{ in }\;\bR^d \setminus B_{k+1}(0),\quad \norm{D\eta}_\infty \le 2, \quad \norm{D^2\eta}_\infty \le 4.
\]
Let $v=ue^{-\mu t}$, where $\mu \ge 1$ is a constant to be specified.
Note that
\[v_k=v_k(t,x) :=\eta_k(x) v(t,x)
\]
satisfies
\begin{equation*}
P v_k+\mu v_k
=f_k:= -2a^{ij}D_i \eta_k D_jv
-v a^{ij}D_{ij} \eta_k \;\text{ in }\; (0,1) \times\bR^d,\quad  v_k(0,\cdot)=0\; \text{ on }\;\bR^d.
\end{equation*}
Let us denote
\[
B_k=B_k(0),\quad B_k^c=\bR^d \setminus B_k(0).
\]
By the $W^{1,2}_p$-estimates (see, for instance, \cite{Kr07}), we have
\begin{multline*}
\mu\norm{v_k}_{L_p((0,1)\times \bR^d)} +\sqrt{\mu} \norm{D v_k}_{L_p((0,1)\times \bR^d)}
+\norm{D^2 v_k}_{L_p((0,1)\times \bR^d)}\\
 \le N_0 \norm{f_k}_{L_p((0,1)\times \bR^d)}\le N_0 \left(\norm{Dv}_{L_p((0,1)\times (B_{k+1}\setminus B_{k}))} + \norm{v}_{L_p((0,1)\times (B_{k+1}\setminus B_{k}))} \right),
\end{multline*}
where $N_0=N_0(d, \lambda, \Lambda, p, \omega_{\mathbf A}^{\mathsf x})$ is independent of $\mu$.
On the other hand, note that
\begin{equation*}
\norm{v}_{L_p((0,1)\times B_{k+1}^c)}+\norm{Dv}_{L_p((0,1)\times B_{k+1}^c)} \le \norm{v_k}_{L_p((0,1)\times \bR^d)} + \norm{Dv_k}_{L_p((0,1)\times \bR^d)}.
\end{equation*} 
Combining the last two inequalities, we have
\begin{align}
								\nonumber
\norm{v}_{L_p((0,1)\times B_{k+1}^c)}+\norm{Dv}_{L_p((0,1)\times B_{k+1}^c)} &\le N_0 \mu^{-\frac12} \left( \norm{v}_{L_p((0,1)\times (B_{k+1}\setminus B_{k}))} + \norm{Dv}_{L_p((0,1)\times (B_{k+1}\setminus B_{k}))}\right)\\
								\label{eq1222w}
&\le N_0\mu^{-\frac12} \left( \norm{v}_{L_p((0,1)\times B_{k}^c)} + \norm{Dv}_{L_p((0,1)\times B_{k}^c)}\right).
\end{align}
Taking $\mu$ so large that $N_0\mu^{-1/2}\le 1/2$ and iterating on $k=1,2,3,\ldots$ in \eqref{eq1222w}, we get
\begin{multline}				\label{eq1223w}
\norm{v}_{L_p((0,1)\times B_{k+1}^c)}+\norm{Dv}_{L_p((0,1)\times B_{k+1}^c)} \\
\le 2^{-k} \left( \norm{v}_{L_p((0,1)\times (B_2\setminus B_1))} + \norm{Dv}_{L_p((0,1)\times (B_2\setminus B_1))}\right)\le C2^{-k}
\end{multline}
for $k=1,2,3, \ldots$, where we used the local $W^{1,2}_p$ estimate and the pointwise estimate \eqref{eq2232w} in the last inequality.

Then, by using \eqref{eq1223w}, the fact that $Pu=0$ in $(0,1)\times \bR^d$, and \eqref{eq2232w} we find that there are constants $C_0 > 1$ and  $\kappa_0>0$ such that
\begin{equation}				\label{eq2312w}
\abs{u(1,x)} \le C_0 e^{-\kappa_0 \abs{x}},\quad \forall x \in \bR^d.
\end{equation}
We remark that in the proof of \eqref{eq2312w} above, we only used the bound \eqref{eq2232w} with $Y=0$.

Notice that for $\varepsilon \in (0,1]$,  if we set $\tilde u$ and $\tilde a^{ij}$ by
\[
\tilde u(t,x)=\varepsilon^d u(\varepsilon^2t, \varepsilon x),\quad \tilde a^{ij}(t,x)=a^{ij}(\varepsilon^2 t, \varepsilon x),
\]
and define the operator $\tilde P$ by
\[
\tilde P \tilde u:= \partial_t \tilde u - \tilde a^{ij} D_{ij} \tilde u,
\]
then it is easily seen that $\tilde u(t,x)$ satisfies $\tilde P \tilde u=0$ in $(0,1)\times \bR^d$ and that $\tilde u$ satisfies the bound \eqref{eq2232w} with $Y=0$, i.e.,
\[
\abs{\tilde u(t,x)} \le C \max(\sqrt{t},\abs{x})^{-d}.
\]
Since $0<\varepsilon \le 1$, we can keep the same the constants $C_0$ and $\kappa_0$ in \eqref{eq2312w} for $\tilde u$ and  obtain
\begin{equation}				\label{eq2310w}\abs{\Gamma(\varepsilon^2,x,0,0)}=\varepsilon^{-d}\abs{\varepsilon^d u(\varepsilon^2, \varepsilon x/\varepsilon )}=\varepsilon^{-d} \abs{\tilde u(1,x/\varepsilon)} \le C_0 \varepsilon^{-d} e^{-\kappa_0 \abs{x}/\varepsilon}.
\end{equation}
Also, since translation does not alter the constants $\kappa_0$ and $C_0$ in the estimate \eqref{eq2310w}, for any $x$, $y \in \bR^{d}$ and $s \in \bR$, we have
\begin{equation}				\label{eq2313w}
\abs{\Gamma(s+\varepsilon^2 ,x,s,y)}  \le C_0 \varepsilon^{-d} e^{- \kappa_0\frac{\abs{x-y}}{\varepsilon}},\quad \forall \varepsilon \in (0,1].
\end{equation}

\subsection{Almost Gaussian estimate}			\label{sec4.2}
For $(t,x) \in (0,1] \times \bR^d$, let $N=N(t,x)> 1$ be an integer to be chosen later.
We partition the interval $(0,1)$ into $N^2$ subintervals of equal length $t/N^2$.
Let us denote
\[
t_j=j(t/N^2),\quad j=1,2,\ldots, N^2.
\]
By using \eqref{eq2313w} and \eqref{eq1549th}, we have
\[
\Gamma(t_{j+1},x_{j+1},0,0)=\int_{\bR^d}\Gamma(t_{j+1},x_{j+1},t_{j},x_j)\Gamma(t_{j},x_{j},0,0)\,dx_j.
\]
Inductively, we have
\[
\Gamma(t_{N^2},x_{N^2},0,0)=\int_{\bR^d}\cdots \int_{\bR^d} \prod_{j=1}^{N^2-1} \Gamma(t_{j+1},x_{j+1},t_{j},x_{j}) \Gamma(t_{1},x_{1},0,0)\,dx_{1}\cdots dx_{N^2-1}.
\]
Therefore, by using \eqref{eq2313w} with $\varepsilon=\sqrt{t}/N$, we have
\begin{align}
							\nonumber
\abs{\Gamma(t_{N^2},x_{N^2},0,0)}
&\le \left(\frac{C_0 N^d}{t^{d/2}}\right)^{N^2}\int_{(\bR^d)^{N^2-1}} \left( \prod_{j=1}^{N^2-1}  e^{-\kappa_0\frac{N\abs{x_{j+1}-x_j}}{\sqrt{t}}}\right) e^{-\kappa_0\frac{N\abs{x_1}}{\sqrt{t}}} \,dx_{1}\cdots dx_{N^2-1}\\
							\label{eq0918sun}
&\le C_0^{N^2} \left(\frac{N^d}{t^{d/2}}\right) \int_{(\bR^d)^{N^2-1}}   e^{-\kappa_0\sum_{j=1}^{N^2-1} \abs{y_j}-\kappa_0 \Abs{\frac{N}{\sqrt{t}} x_{N^2}-\sum_{j=1}^{N^2-1} y_j}}\,dy_{1}\cdots dy_{N^2-1},
\end{align}
where we used the change of variables
\[
y_1=\frac{N}{\sqrt{t}} x_1;\qquad
y_j=\frac{N}{\sqrt{t}}(x_j-x_{j-1}),\;\; j=2,\ldots, N^2-1.
\]
By the triangle inequality, for any $(y_1, \ldots, y_{N^2-1})\in (\bR^{d})^{N^2-1}$, we have
\begin{equation}				\label{eq2228sun}
\sum_{j=1}^{N^2-1} \abs{y_j}+\biggabs{\frac{N}{\sqrt{t}} x-\sum_{j=1}^{N^2-1} y_j} \ge  \sum_{j=1}^{N^2-1} \abs{y_j} + \biggabs{\frac{N}{\sqrt{t}}x} - \biggabs{\sum_{j=1}^{N^2-1} y_j} \ge \frac{N}{\sqrt{t}}\abs{x}.
\end{equation}
For $n=0, 1,2,\ldots$, let us denote 
\begin{equation*}
\Omega_n=\Set{(y_1, \ldots, y_{N^2-1})\in (\bR^{d})^{N^2-1}: n\frac{N}{\sqrt{t}}\abs{x} \le \sum_{j=1}^{N^2} \abs{y_j} < (n+1) \frac{N}{\sqrt{t}}\abs{x}, \;\;\sum_{j=1}^{N^2}  y_j=\frac{N}{\sqrt{t}}x  }.
\end{equation*}
If $(y_1, \ldots, y_{N^2-1}) \in \Omega_n$, then we have
\begin{equation}				\label{eq2227sun}
\sum_{j=1}^{N^2-1} \abs{y_j}+\biggabs{\frac{N}{\sqrt{t}} x-\sum_{j=1}^{N^2-1} y_j}= \sum_{j=1}^{N^2} \abs{y_j} \ge n\frac{N}{\sqrt{t}}\abs{x}.
\end{equation}
Notice that $d(N^2-1)$-dimensional Lebesgue measure $\abs{\Omega_n}$ is bounded by
\begin{equation}				\label{eq2229sun}
\abs{\Omega_n} \le \left(2(n+1) \frac{N}{\sqrt{t}}\abs{x} \right)^{d(N^2-1)}
\end{equation}
and $\Omega_0=\emptyset$.

By taking $x_{N^2}=x$ and decomposing the last integral in \eqref{eq0918sun} into the sums of integrals over $\Omega_n$, we obtain from \eqref{eq2227sun}, \eqref{eq2228sun}, and \eqref{eq2229sun} that
\begin{align}
							\nonumber
&\abs{\Gamma(t,x,0,0)}
\le C_0^{N^2} \left(\frac{N^d}{t^{d/2}}\right) \left(e^{-\kappa_0  \frac{N\abs{x}}{\sqrt{t}}} \abs{\Omega_1}+ \sum_{n=2}^\infty e^{-\kappa_0 n \frac{N\abs{x}}{\sqrt{t}}}  \abs{\Omega_n} \right)\\
							\label{eq1144sun}
&\le C_0^{N^2}  \left(\frac{N^d}{t^{d/2}}\right) \left(\frac{2N\abs{x}}{\sqrt{t}} \right)^{d(N^2-1)} e^{-\kappa_0  \frac{N\abs{x}}{\sqrt{t}}} \left(2^{d(N^2-1)} + \sum_{n=2}^\infty (n+1)^{d(N^2-1)}e^{-\kappa_0 (n-1) \frac{N\abs{x}}{\sqrt{t}}}\right).
\end{align}
By the integral comparison, the binomial formula, and Stirling's formula, we have
\begin{align}
							\nonumber
\sum_{n=2}^\infty (n+1)^{k}e^{-\alpha (n-1)}
&\le \sum_{n=2}^\infty \int_{n-2}^{n-1} (s+3)^{k}e^{-\alpha s}\,ds
= \int_0^\infty (s+3)^{k}e^{-\alpha s} \,ds\\
							\nonumber
&=\int_0^\infty \sum_{m=0}^k {k \choose m}\, s^{m}3^{k-m} e^{-\alpha s} \,ds = \sum_{m=0}^k {k \choose m}\, 3^{k-m}\alpha^{-m-1}  \int_{0}^\infty s^{m}e^{-s} \,ds\\
							\nonumber
&= \sum_{m=0}^k {k \choose m}\, 3^{k-m}\alpha^{-m-1} m! \le  \frac{3^k}{\alpha}k! \sum_{m=0}^k {k \choose m}\,\left( \frac{1}{3\alpha}\right)^{m}\\
							\label{eq1145sun}
&=\frac{k!}{\alpha}\left(3+\frac{1}{\alpha}\right)^k \le c_0\frac{\sqrt{k}}{\alpha}   \left(\frac{k}{e}\right)^k \left(3+\frac{1}{\alpha}\right)^k,
\end{align}
where $c_0$ is an absolute constant.
By combining \eqref{eq1144sun} and \eqref{eq1145sun}, we have
\begin{align}
							\nonumber
\abs{\Gamma(t,x,0,0)}& \le
\frac{1}{t^{d/2}} e^{-\kappa_0 \frac{N\abs{x}}{\sqrt{t}}}  C_0^{N^2}  N^d  \left(\frac{4N\abs{x}}{\sqrt{t}} \right)^{d(N^2-1)} \\
							\nonumber
&\!\!\!\!\!\!\!\!\!\!+ c_0 \frac{1}{t^{d/2}} e^{-\kappa_0  \frac{N\abs{x}}{\sqrt{t}}}\sqrt{d(N^2-1)}\left( \frac{\sqrt{t}}{\kappa_0 N\abs{x}} \right)C_0^{N^2}  N^d  \left\{ \frac{2d(N^2-1)}{e}\left(\frac{3N\abs{x}}{\sqrt{t}} +\frac{1}{\kappa_0}\right)\right\}^{d(N^2-1)}\\
							\nonumber
& \le \frac{1}{t^{d/2}} e^{-\kappa_0 \frac{N\abs{x}}{\sqrt{t}}}  C_0^{N^2}  N^d  \left(\frac{4N\abs{x}}{\sqrt{t}} \right)^{d(N^2-1)} \\
							\label{eq1629m}
&\quad+\frac{c_0\sqrt{d}}{t^{d/2}} e^{-\kappa_0  \frac{N\abs{x}}{\sqrt{t}}}\left( \frac{\sqrt{t}}{\kappa_0 \abs{x}} \right)C_0^{N^2}  N^d \left\{ \frac{2d(N^2-1)}{e}\left(\frac{3N\abs{x}}{\sqrt{t}} +\frac{1}{\kappa_0}\right)\right\}^{d(N^2-1)}.
\end{align}
Let us write $\xi=x/\sqrt{t}$ and take $N=\lceil\abs{\xi}^{1-\delta}\rceil$, where $\delta \in (0,1)$ is fixed but arbitrary.
Note that
\[
\abs{\xi}^{1-\delta} \le N < \abs{\xi}^{1-\delta}+1.
\]
Let us consider
\begin{align*}
A&=-\kappa_0 \abs{\xi}^{2-\delta}+(\log C_0) (\abs{\xi}^{1-\delta}+1)^2+d  \log(\abs{\xi}^{1-\delta}+1)\\
&\qquad \qquad+d\{(\abs{\xi}^{1-\delta}+1)^2 -1)\log(4 (\abs{\xi}^{1-\delta}+1)\abs{\xi}),\\
B &=-\kappa_0\abs{\xi}^{2-\delta} - \log(\kappa_0 \abs{\xi}) +(\log C_0) (\abs{\xi}^{1-\delta}+1)^2+d \log(\abs{\xi}^{1-\delta}+1)\\
&\qquad \qquad+ d\{(\abs{\xi}^{1-\delta}+1)^2 -1)\log(2d((\abs{\xi}^{1-\delta}+1)^2 -1)(3\abs{\xi}(\abs{\xi}^{1-\delta}+1)+\kappa_0^{-1})e^{-1}).
\end{align*}
Note that there exist $R_0=R_0(\delta, C_0, d, \kappa_0)\ge 1$ such that if $\abs{\xi} > R_0$, then
\[
A \le - \beta \abs{\xi}^{2-\delta}, \quad
B \le -\beta \abs{\xi}^{2-\delta},
\]
where $\beta=\kappa_0/2$.
Then, it follows from \eqref{eq1629m} that for any $(t,x) \in (0,1] \times \bR^d$ with $\abs{x}/\sqrt{t} > R_0$, we have
\[
\abs{\Gamma(t,x,0,0)} \le C_1 t^{-d/2} \exp\left(-\beta(\abs{x}/\sqrt{t})^{2-\delta}\right),\quad\text{where }\;C_1=C_1(d).
\]
On the other hand, in the case when $(t,x) \in (0,1)\times \bR^d$ satisfies $\abs{x}/\sqrt{t} \le R_0$, then we can use \eqref{eq2310w} to bound $\Gamma(t,x,0,0)$.

In conclusion, we have the following:
For any $\delta\in(0,1)$, there exists $C=C(d, \lambda, \Lambda, \omega_{\mathbf A}^{\mathsf x}, \delta)$ such that
\[
\abs{\Gamma(t,x,0,0)} \le  Ct^{-d/2} \exp\left(-\beta (\abs{x}/\sqrt{t})^{2-\delta}\right)\quad\text{on }\;(0,1]\times \bR^d.
\]
Finally, by translation and the semigroup property \eqref{eq1549th}, we get \eqref{eq1742sun}.\qed
\section{Proof of Theorem~\ref{thm3}}			\label{sec5}

Let $\Gamma(t,x, \tau,\xi)$ be the fundamental solution of the operator $P$ constructed in Section~\ref{sec3}.
Let $y \in \bR^n$ be fixed and let $\bar P^{y}$ be given by
\[
\bar P^{y} u = \partial_t u - a^{ij}(t, y) D_{ij}u.
\]
Let $\Phi^{y}(t,x, \tau,\xi)$ be the fundamental solution of the operator $\bar P^{y}$.
Notice that the coefficients of $\bar P^{y}$ depend only on $t$ and thus one can  compute $\Phi^{y}(t,x, \tau,\xi)$ by using the Fourier transform.
However, we do not need its explicit form and will just make use of the following fact.
For $t>\tau$ we have
\begin{equation}				\label{eq1638f}
\begin{aligned}
\abs{\Phi^y (t,x,\tau,\xi)}  &\le \frac{C_0}{(t-\tau)^{d/2}} e^{-\kappa_0 \frac{\abs{x-\xi}^2}{t-\tau}},\\\
\abs{D^2_{x}\Phi^y(t,x,\tau,\xi)}  &\le \frac{C_0'}{(t-\tau)^{d/2}}\left(\frac{1}{t-\tau}+\frac{\abs{x-\xi}^2}{(t-\tau)^2}\right) e^{-\kappa_0 \frac{\abs{x-\xi}^2}{t-\tau}},
\end{aligned}
\end{equation}
where $C_0=C_0(d,\lambda, \Lambda)$, $C_0'=C_0'(d, \lambda, \Lambda)$, and $\kappa_0=\kappa_0(\lambda, \Lambda)$ are positive constants.

\subsection{Modified parametrix method}
Notice that we have
\begin{align*}
P \Gamma(t,x, \tau,\xi)&-P\Phi^{y}(t,x, \tau,\xi) \\
&= P\Gamma(t,x, \tau,\xi)  -P \Phi^{y}(t,x, \tau,\xi) + \bar P^{y} \Phi^{y}(t,x, \tau,\xi) - \bar P^{y} \Phi^{y}(t,x, \tau,\xi)\\
&=-(P-\bar P^{y}) \Phi^{y}(t,x,\tau,\xi)= (a_{ij}(t,x)-a_{ij}(t,y))D_{ij} \Phi^{y}(t,x,\tau,\xi).
\end{align*}
In particular, by taking $y=\xi$ and setting
\begin{equation}				\label{eq1601th}
v(t,x, \tau,\xi):=\Gamma(t,x, \tau,\xi)-\Phi^{\xi}(t,x, \tau,\xi),
\end{equation}
we have
\begin{equation*}				
P v(t,x, \tau,\xi)= (a_{ij}(t,x)-a_{ij}(t,\xi))D_{ij} \Phi^{\xi}(t,x,\tau,\xi).
\end{equation*}
We shall shortly show that the following representation formula is available:
\begin{equation}			\label{eq2244sat}
v(t,x,\tau,\xi)=\int_{\tau}^t \int_{\bR^d} \Gamma(t,x,s,y)  (a_{ij}(s,y)-a_{ij}(s,\xi))D_{ij} \Phi^{\xi} (s,y, \tau,\xi)\,dy ds.
\end{equation}
It then follows from \eqref{eq2244sat} that $v$ satisfies the relation
\begin{align}				
						\nonumber
v(t,x,\tau,\xi)&=\int_{\tau}^t \int_{\bR^d} \Phi^{y}(t,x,s,y)  (a_{ij}(s,y)-a_{ij}(s,\xi))D_{ij} \Phi^\xi(s,y,\tau,\xi)\,dy ds\\
						\label{eq1616th}
&\qquad +\int_{\tau}^t \int_{\bR^d} v(t,x,s,y)  (a_{ij}(s,y)-a_{ij}(s,\xi))D_{ij} \Phi^\xi(s,y, \tau,\xi)\,dy ds.
\end{align}
We note that both integrals in \eqref{eq1616th} are absolutely convergent.
See \eqref{eq2049f}.
The last formula is reminiscent of the classical parametrix method for constructing the fundamental solutions.
First, we set
\begin{equation}			\label{eq2057th}
w_0(t,x,\tau,\xi)=\int_{\tau}^t \int_{\bR^d} \Phi^{y}(t,x,s,y)  (a_{ij}(s,y)-a_{ij}(s,\xi))D_{ij} \Phi^\xi(s,y, \tau,\xi)\,dy ds
\end{equation}
and inductively define for $k=0, 1, 2, \ldots$,
\begin{equation}			\label{eq2058th}
w_{k+1}(t,x,\tau,\xi)=\int_{\tau}^t \int_{\bR^d} w_k(t,x,s,y)  (a_{ij}(s,y)-a_{ij}(s,\xi))D_{ij} \Phi^\xi(s,y, \tau,\xi)\,dy ds.
\end{equation}
Suppose that
\begin{equation}			\label{eq2100th}
w(t,x, \tau, \xi):= \sum_{k=0}^\infty w_k(t,x,\tau,\xi)
\end{equation}
converges uniformly.
Then by summing over $k=0,1,2,\ldots$ in \eqref{eq2058th}, we find
\[
w(t,x, \tau, \xi) =  w_0(t,x,\tau, \xi) + \int_{\tau}^t \int_{\bR^d}  w(t,x,s,y)  (a_{ij}(s,y)-a_{ij}(s,\xi))D_{ij} \Phi^{\xi}(s,y, \tau,\xi)\,dy ds.
\]
Since we also have \eqref{eq1616th}, it is plausible that
\begin{equation}				\label{eq1617th}
v(t,x, \tau, \xi)=w(t,x, \tau, \xi).
\end{equation}
We shall verify \eqref{eq1617th} after we establish the Gaussian estimate for $w$.

\subsection{Gaussian estimate for $w$}
Recall that we assume $\mathbf A$ is uniformly Dini continuous in $x$, that is, 
\begin{equation*}
\varrho_{\mathbf A}^{\mathsf x}(r):=
\sup\,\Set{\abs{\mathbf A(t,x)-\mathbf A(t,y)}: x, y \in \bR^d,\,  t\in \bR,\, \abs{x-y}\le r\,}
\end{equation*}
satisfies the Dini condition
\[
\int_0^1 \frac{\varrho_{\mathbf A}^{\textsf x}(r)}{r}\,dr <+\infty.
\]
It follows from \eqref{eq2057th} and \eqref{eq1638f} that
\begin{align}
							\nonumber
&\abs{w_0(t,x, \tau, \xi)} \le \int_{\tau}^{t} \int_{\bR^d} \abs{\Phi^{y}(t,x,s,y)} \abs{\mathbf{A}(s,y)-\mathbf{A}(s,\xi)} \abs{D^2 \Phi^\xi(s,y,\tau, \xi)}\,dy ds\\
							\label{eq1635f}
&\qquad \le\int_{\tau}^{t} \int_{\bR^d}  \frac{C_0 C_0'e^{-\kappa_0 \frac{\abs{x-y}^2}{t-s}} }{(t-s)^{d/2}(s-\tau)^{d/2}}  \varrho_{\mathbf A}^{\mathsf x}(\abs{y-\xi}) \left(\frac{1}{s-\tau}+\frac{\abs{y-\xi}^2}{(s-\tau)^2}\right)e^{-\kappa_0 \frac{\abs{y-\xi}^2}{s-\tau}} dyds.
\end{align}
Since $\varrho_{\mathbf A}^{\mathsf x}$ is increasing and by the triangle inequality, we have
\[
\varrho_{\mathbf A}^{\mathsf x}(r_1+r_2)\le \varrho_{\mathbf A}^{\mathsf x}(r_1)+\varrho_{\mathbf A}^{\mathsf x}(r_2),\qquad\forall r_1,r_2\ge 0,
\]
it follows that
\begin{equation*}
\frac{\varrho_{\mathbf A}^{\mathsf x}(\abs{y-\xi})}{\abs{y-\xi}} \le 2 \frac{\varrho_{\mathbf A}^{\mathsf x}(\sqrt{s-\tau})}{\sqrt{s-\tau}} \quad \text{for }\;\abs{y-\xi} \ge \sqrt{s-\tau}.
\end{equation*}
Therefore, in the case when $\abs{y-\xi} \ge \sqrt{s-\tau}$, we have
\[
\varrho_{\mathbf A}^{\mathsf x}(\abs{y-\xi}) \left(\frac{1}{s-\tau}+\frac{\abs{y-\xi}^2}{(s-\tau)^2}\right)
\le 2  \frac{\varrho_{\mathbf A}^{\mathsf x}(\sqrt{s-\tau})}{s-\tau}\left( \frac{\abs{y-\xi}^2}{s-\tau} \right)^{\frac12}\left(1+\frac{\abs{y-\xi}^2}{s-\tau} \right).
\]
On the other hand, if $\abs{y-\xi} < \sqrt{s-\tau}$, then we have
\[
\varrho_{\mathbf A}^{\mathsf x}(\abs{y-\xi}) \left(\frac{1}{s-\tau}+\frac{\abs{y-\xi}^2}{(s-\tau)^2}\right) \le \frac{\varrho_{\mathbf A}^{\mathsf x}(\sqrt{s-\tau})}{s-\tau} \left(1+\frac{\abs{y-\xi}^2}{s-\tau}\right).
\]
In both cases, notice that for any $\kappa'_0 \in (0, \kappa_0)$, there is a  constant $C_1=C_1(\kappa_0, \kappa_0')>0$ such that we have
\begin{equation}				\label{eq1541f}
\varrho_{\mathbf A}^{\mathsf x}(\abs{y-\xi}) \left(\frac{1}{s-\tau}+\frac{\abs{y-\xi}^2}{(s-\tau)^2}\right) e^{-\kappa_0 \frac{\abs{y-\xi}^2}{s-\tau}}  \le C_1 \frac{\varrho_{\mathbf A}^{\mathsf x}(\sqrt{s-\tau})}{s-\tau} e^{-\kappa_0' \frac{\abs{y-\xi}^2}{s-\tau}}.
\end{equation}
We recall the following identity, which is a simple consequence of the Fourier transform:
For $\tau < s<t$, we have
\begin{equation}				\label{lem5.9}
\int_{\bR^d}  \frac{1}{(t-s)^{d/2}} e^{-\kappa_0' \frac{\abs{x-y}^2}{t-s}} \frac{1}{(s-\tau)^{d/2}}  e^{-\kappa_0' \frac{\abs{y-\xi}^2}{s-\tau}}\,dy = C_2 \frac{1}{(t-\tau)^{d/2}} e^{-\kappa_0' \frac{\abs{x-\xi}^2}{t-\tau}},
\end{equation}
where
\[
C_2= \int_{\bR^d} e^{-\kappa_0' \abs{y}^2}\,dy= (\pi/\kappa_0')^{d/2}.
\]

Therefore, by plugging in \eqref{eq1541f} into \eqref{eq1635f} and using the identity \eqref{lem5.9}, we get
\begin{align}
							\nonumber
\abs{w_0(t,x, \tau, \xi)} &\le C_0 C_0' C_1 \int_{\tau}^{t} \frac{\varrho_{\mathbf A}^{\mathsf x}(\sqrt{s-\tau})}{s-\tau} \left(\int_{\bR^d}  \frac{1}{(t-s)^{d/2}} e^{-\kappa_0' \frac{\abs{x-y}^2}{t-s}} \frac{1}{(s-\tau)^{d/2}}  e^{-\kappa_0' \frac{\abs{y-\xi}^2}{s-\tau}}\,dy\right) ds\\
							\label{eq1603f}
&\le \left(C_0 C_0' C_1 C_2  \int_{\tau}^{t} \frac{\varrho_{\mathbf A}^{\mathsf x}(\sqrt{s-\tau})}{s-\tau}\,ds\right)  \frac{1}{(t-\tau)^{d/2}} e^{-\kappa_0' \frac{\abs{x-\xi}^2}{t-\tau}}.
\end{align}
Note that
\[
\int_{\tau}^{t} \frac{\varrho_{\mathbf A}^{\mathsf x}(\sqrt{s-\tau})}{s-\tau}\,ds= 2 \int_0^{\sqrt{t-\tau}} \frac{\varrho_{\mathbf A}^{\mathsf x}(s)}{s}\,ds.
\]
Let $\varepsilon_0 \in (0,1)$ be to fixed later.
Take $\delta_0>0$ such that
\begin{equation}							\label{eq1604f}
2C_0' C_1 C_2  \int_0^{\delta_0} \frac{\varrho_{\mathbf A}^{\mathsf x}(s)}{s}\,ds \le \varepsilon_0.
\end{equation}
Then we find from \eqref{eq1603f} and \eqref{eq1604f} that
\begin{equation}							\label{eq1605f}
\abs{w_0(t,x, \tau, \xi)} \le \varepsilon_0 C_0 \frac{1}{(t-\tau)^{d/2}} e^{-\kappa_0' \frac{\abs{x-\xi}^2}{t-\tau}}\quad\text{provided } 0< t-\tau \le \delta_0^2.
\end{equation}

Now using \eqref{eq2058th}, \eqref{eq1605f}, and \eqref{eq1638f}, we get
\begin{align*}
&\abs{w_1(t,x, \tau, \xi)} \le \int_{\tau}^{t} \int_{\bR^d} \abs{w_0(t,x,s,y)} \abs{\mathbf{A}(s,y)-\mathbf{A}(s,\xi)} \abs{D^2 \Phi^\xi(s,y,\tau, \xi)}\,dy ds\\
&\qquad\le\varepsilon_0  \int_{\tau}^{t} \int_{\bR^d}  \frac{C_0 C_0'e^{-\kappa_0' \frac{\abs{x-y}^2}{t-s}} }{(t-s)^{d/2}(s-\tau)^{d/2}}  \varrho_{\mathbf A}^{\mathsf x}(\abs{y-\xi}) \left(\frac{1}{s-\tau}+\frac{\abs{y-\xi}^2}{(s-\tau)^2}\right)e^{-\kappa_0 \frac{\abs{y-\xi}^2}{s-\tau}} dyds.
\end{align*}
By using \eqref{eq1541f} and repeating the same computation as in \eqref{eq1603f}, we get
\begin{equation*}							
\abs{w_1(t,x, \tau, \xi)} \le \varepsilon_0^2 C_0 \frac{1}{(t-\tau)^{d/2}} e^{-\kappa_0' \frac{\abs{x-\xi}^2}{t-\tau}}\quad\text{provided } 0< t-\tau \le \delta_0^2.
\end{equation*}
Inductively, we have
\begin{equation*}							
\abs{w_k(t,x, \tau, \xi)} \le \varepsilon_0^{k+1} C_0 \frac{1}{(t-\tau)^{d/2}} e^{-\kappa_0' \frac{\abs{x-\xi}^2}{t-\tau}}\quad\text{provided } 0< t-\tau \le \delta_0^2.
\end{equation*}
Then by \eqref{eq2100th}, we have for $0< t-\tau \le \delta_0^2$ that
\begin{equation}							\label{eq1657f}
\abs{w(t,x, \tau, \xi)} \le \sum_{k=0}^\infty \varepsilon_0^{k+1} C_0 \frac{1}{(t-\tau)^{d/2}} e^{-\kappa_0' \frac{\abs{x-\xi}^2}{t-\tau}} \le \frac{\varepsilon_0 C_0}{1-\varepsilon_0} \frac{1}{(t-\tau)^{d/2}} e^{-\kappa_0' \frac{\abs{x-\xi}^2}{t-\tau}}.
\end{equation}

\subsection{Verification of \eqref{eq2244sat} and \eqref{eq1617th}}
We shall prove \eqref{eq2244sat} first.
Let us denote
\[
f(s,y):=(a_{ij}(s,y)-a_{ij}(s,\xi))D_{ij} \Phi^\xi(s,y, \tau,\xi).
\]
Notice that in deriving \eqref{eq1603f}, we have seen that
\begin{equation}			\label{eq1407sun}
\abs{f(s,y)} \le C\, \frac{\varrho_{\mathbf A}^{\mathsf x}(\sqrt{s-\tau})}{s-\tau} \frac{1}{(s-\tau)^{d/2}} e^{-\kappa_0' \frac{\abs{y-\xi}^2}{s-\tau}}.
\end{equation}
Write $Z=(\tau,\xi)$ and let $\zeta$ be a smooth function on $\bR^{d+1}$ such that
\begin{equation*}			
0\leq \zeta \leq 1, \;\; \zeta=0 \;\text{ in }\;Q_{r/2}(Z),\;\; \zeta=1\;\text{ in }\bR^{d+1}\setminus Q_{r}(Z),\;\; \abs{\partial_t \zeta}+ \abs{D\zeta}^2+\abs{D^2\zeta}\le C r^{-2},
\end{equation*}
where, $0<r < \frac14 (t-\tau)$.
Then, $\tilde v=\zeta v(\cdot,\cdot, \tau, \xi)$ satisfies
\[
P\tilde v=\zeta f+ v P\zeta -2a^{ij}D_i v D_j \zeta\;\text{ in }\;(\tau, t)\times \bR^d,\quad \tilde v(\tau, \cdot)=0 \;\text{ on }\;\bR^d.
\]
Notice that  $\zeta f+ v P\zeta -2a^{ij}D_i v D_j \zeta \in L_{p}((\tau, t)\times \bR^d)$ with $p>(d+2)/2$.
Therefore, by Proposition~\ref{prop3.5} and the symmetry relation \eqref{symmetry},  we have
\begin{align*}
&v(t, x, \tau,\xi) = \tilde v(t,x) = I+II:= \int_{\tau}^{t} \int_{\bR^d} \Gamma(t,x,s,y) \zeta(s,y) f(s,y)\,dyds\\
&\quad+\int_{Q_{r}(Z) \setminus Q_{r/2}(Z)}\!\!\! \Gamma(t,x,s,y) \left\{P\zeta(s,y)v(s,y,\tau,\xi) -2a^{ij}D_i \zeta(s,y) D_j v(s,y,\tau,\xi) \right\}\,dyds.
\end{align*}
We claim that $II \to 0$ as $r \to 0$.
Assume the claim for now.
By \eqref{eq1407sun} and \eqref{eq1742sun}, we see that $I$ is absolutely convergent, that is,
\begin{multline}				\label{eq2049f}
\int_\tau^t \int_{\bR^d}  \abs{\Gamma(t,x,s,y) f(s,y)} \,dyds\\
=\int_\tau^{\frac{t+\tau}{2}}\!\!\! \int_{\bR^d}  + \int_{\frac{t+\tau}{2}}^t \int_{\bR^d} \abs{\Gamma(t,x,s,y) f(s,y)} \,dyds <+\infty,
\end{multline}
and thus we obtain \eqref{eq2244sat} by the dominated convergence theorem applied to
$I$.

Now, we prove the claim that $II \to 0$.
For $Y=(s,y) \in (\tau,t)\times \bR^d$, let
\[
\tilde r=\tfrac15 \abs{Y-Z}.
\]
We set $\delta=\alpha \tilde r$, where $\alpha>1$ is to be specified.
Recall that
\[
v(s,y,\tau,\xi)=\Gamma(s,y,\tau, \xi) - \Phi^{\xi}(s,y,\tau, \xi)=\Gamma^\ast(\tau, \xi, s, y)-(\Phi^{\xi})^\ast(\tau, \xi, s,y),
\]
and note that
\begin{equation}				\label{eq2054w}
v^\ast=v^\ast(\cdot, \cdot)=\Gamma(s,y,\cdot,\cdot)-\Phi^{\xi}(s,y,\cdot,\cdot)
\end{equation}
satisfies
\[
P^\ast v^\ast = \dv^2((\mathbf A-\widetilde{\mathbf A}_0) \widetilde \Phi_0)\;\text{ in }\; (\tau -1,s) \times \bR^d,\quad v^\ast(s,\cdot)=0 \;\text{ on }\;\bR^d,
\]
where we set
\[
\widetilde{\mathbf A}_0=\widetilde{\mathbf A}_0(\cdot)=\mathbf A(\cdot ,\xi)\quad\text{and}\quad \widetilde \Phi_0=\widetilde \Phi_0(\cdot,\cdot)= (\Phi^{\xi})^\ast(\cdot, \cdot, s, y).
\]
Let $\tilde\zeta$ be a smooth function on $\bR^{d+1}$ such that
\[
0\le \tilde\zeta \leq 1, \quad \tilde \zeta=0 \;\text{ in }\;Q_{\delta/2}(Y),\quad \tilde\zeta=1\;\text{ in }\bR^{d+1}\setminus Q_{\delta}(Y),\quad \abs{D\tilde \zeta} \le 4/\delta,
\]
and define $\mathbf{g}_1$ and $\mathbf{g}_2$ by (cf. \eqref{eq1654t})
\begin{equation*}
\mathbf{g}_1= \tilde\zeta(\mathbf{A}- \widetilde{\mathbf A}_0) \widetilde \Phi_0 \quad\text{and}\quad \mathbf{g}_2= (1-\tilde\zeta)(\mathbf{A}-\widetilde{\mathbf A}_0) \widetilde \Phi_0.
\end{equation*}
Noting that $\norm{\mathbf A-\widetilde{\mathbf A}_0}_\infty \le C(d,\Lambda)$ and using \eqref{eq1948sun} and properties of $\tilde \zeta$, we have
\begin{equation}
					\label{eq1730f}
\int_{\bR^{d+1}} \abs{\mathbf g_1}^{\frac{2(d+2)}{d}} \le \sum_{k=0}^\infty \int_{Q_{2^{k}\delta}(Y)\setminus Q_{2^{k-1}\delta}(Y)} \abs{\mathbf{g}_1}^{\frac{2(d+2)}{d}}
\le C \sum_{k=0}^\infty (2^k \delta)^{-d-2}  \le C\delta^{-d-2}.
\end{equation}
Note that we have
\[
\abs{\mathbf A-\widetilde{\mathbf A}_0} \le \varrho_{\mathbf A}^{\mathsf x}(\delta) + \varrho_{\mathbf A}^{\mathsf x}(5\tilde r) \le (\alpha+6)\varrho_{\mathbf A}^{\mathsf x}(\tilde r) \quad \text{in }\; Q_{\delta}(Y),
\]
and thus we have
\begin{equation}
						\label{eq1432f}
\int_{\bR^{d+1}}\, \abs{\mathbf g_2}^{\frac{d+1}{d}} \le \left((\alpha+6) \varrho_{\mathbf A}^{\mathsf x}(\tilde r)\right)^{\frac{d+1}{d}}  \int_{Q_{\delta}(Y)} \abs{\mathbf{g}_2}^{\frac{d+1}{d}}
 \le  C\left((\alpha+6) \varrho_{\mathbf A}^{\mathsf x}(\tilde r)\right)^{\frac{d+1}{d}} \delta.
\end{equation}
Let $v_i$ ($i=1,2$) be the solutions of the problems
\[
P^\ast v_i = \dv^2 \mathbf{g}_i\;\text{ in }\; (\tau-1,s)\times \bR^d,\quad
v_i(s,\cdot)=0\;\text{ on }\;\bR^d \qquad(i=1,2).
\]
We extend $v_1$ and $v_2$ to be zero on $(s,\infty)\times \bR^d$.
By Lemma~\ref{lem01} together with \eqref{eq1730f} and \eqref{eq1432f}, we have
\begin{equation}				\label{eq1618f}
\norm{v_1}_{L_{2(d+2)/d}((\tau-1,s)\times \bR^d)} \le C\delta^{-\frac{d}{2}}\;\text{ and }\;
\norm{v_2}_{L_{(d+1)/d}((\tau-1,s)\times \bR^d)} \le C (\alpha+6) \varrho_{\mathbf A}^{\mathsf x}(\tilde r)\delta^{\frac{d}{d+1}}.
\end{equation}
By \eqref{eq0854m}, we see that both $v_1$ and $v_2$ also belong to $L_{p}((\tau-1,s)\times \bR^d)$ for any $p\in (1, (d+2)/d)$.
Therefore, by the uniqueness, we have
\begin{equation}			\label{eq1710m}
v^\ast =v_1+v_2.
\end{equation}
We now estimate $v_1(Z)$ and $v_2(Z)$. By using  Lemma~\ref{lem02}, we have
\begin{equation}				\label{eq1335sat}
\abs{v_i(Z)} \le  C \fint_{Q_{2\tilde r}^{+}(Z)} \abs{v_i} + C \int_0^{\tilde r} \frac{\omega_{\mathbf{g}_i}^{\mathsf x}(\tilde t, Q_{2\tilde r}^{+}(Z))}{\tilde t}\,d\tilde t\qquad (i=1,2).
\end{equation}
Using \eqref{eq1618f} together with H\"older's inequalities, we have
\begin{equation}				\label{eq1340sat}
\begin{aligned}
\fint_{Q_{2\tilde r}^{+}(Z)} \abs{v_1} &\le C {\tilde r}^{-\frac{d}{2}}  \norm{v_1}_{L_{2(d+2)/d}(Q_{2\tilde r}^{+}(Z))} \le C  {\tilde r}^{-\frac{d}{2}}\delta^{-\frac{d}{2}},\\
\fint_{Q_{2\tilde r}^{+}(Z)} \abs{v_2} &\le C{\tilde r}^{-\frac{(d+2)d}{d+1}}  \norm{v_2}_{L_{(d+1)/d}Q_{2\tilde r}^{+}(Z))} \le C{\tilde r}^{-d} (\alpha+6) \alpha^{\frac{d}{d+1}} \varrho_{\mathbf A}^{\mathsf x}(\tilde r).
\end{aligned}
\end{equation}
By using the bound of $\widetilde \Phi_0$, we have
\begin{equation}				\label{eq1935w}
\omega_{\mathbf g_i}^{\mathsf x}(\tilde t, Q_{2\tilde r}^+(Z)) \le C(d,\lambda,\Lambda) {\tilde r}^{-d} \varrho_{\mathbf A}^{\mathsf x}(\tilde t),\quad \forall \tilde t \in (0, \tilde r]\qquad (i=1,2).
\end{equation}
By combining \eqref{eq1710m}, \eqref{eq1335sat}, \eqref{eq1340sat}, and \eqref{eq1935w}, we obtain
\[
\abs{v^\ast(Z)}
\le C {\tilde r}^{-d}\left(\alpha^{-\frac{d}{2}}+ (\alpha+6) \alpha^{\frac{d}{d+1}} \varrho_{\mathbf A}^{\mathsf x}(\tilde r) +\int_0^{\tilde r} \frac{\varrho_{\mathbf A}^{\mathsf x}(\tilde t)}{\tilde t}\,d\tilde t\right),
\]
where $C$ is a constant independent of $\tilde r$.
Recall that $\abs{Y-Z}=5\tilde r$.
Now for any $\varepsilon\in (0,1)$, we can take $\alpha>1$ sufficiently large and then $\tilde r$ sufficiently small such that
\begin{equation}				\label{eq1406sat}
\abs{v^\ast(Z)} \le \varepsilon \abs{Y-Z}^{-d}.
\end{equation}
Therefore, we conclude from \eqref{eq1406sat} and \eqref{eq2054w} that for all small $r>0$,
\[
r^d \abs{v(s,y,\tau,\xi)} =o(r), \quad \forall (s,y) \in Q_{2r}(Z)\setminus Q_{r/4}(Z),
\]
where we use $o(r)$ to denote some bounded quantity that tends to $0$ as $r \to 0$.

To estimate $Dv$, we use the equation $Pv=f$. Notice that
\[
\abs{f(s,y)} \le C \varrho_{\mathbf A}^{\mathsf x}(\abs{y-\xi}) \abs{Z-Y}^{-d-2} \le C  \varrho_{\mathbf A}^{\mathsf x}(r) r^{-d-2} \quad\text{in }\;Q_{2r}(Z) \setminus Q_{r/4}(Z).
\]
Therefore, by using \eqref{eq1406sat}, the local $W^{1,2}_p$ estimate
\begin{align*}
&\Norm{r^2 \abs{\partial_s v(\cdot,\cdot,\tau,\xi)}+
r^2 \abs{D_y^2v(\cdot,\cdot,\tau,\xi)}+r\abs{D_y v(\cdot,\cdot,\tau,\xi)}}_{L_p(Q_{r}(Z) \setminus Q_{r/2}(Z))}\\
&\qquad\qquad \le C \norm{v(\cdot,\cdot,\tau,\xi)}_{L_p(Q_{2r}(Z) \setminus Q_{r/4}(Z))}
+Cr^2 \norm{f}_{L_p(Q_{2r}(Z) \setminus Q_{r/4}(Z))},\quad p>d+2,
\end{align*}
and the Sobolev embedding, we have
\begin{equation}				\label{eq1940m}
r^{d+1} \abs{Dv(s,y,\tau,\xi)} =o(r),\quad  \forall (s,y) \in Q_{r}(Z)\setminus Q_{r/2}(Z).
\end{equation}
Therefore, by using \eqref{eq1406sat}, \eqref{eq1940m}, and the properties of $\zeta$, we get $II \to 0$ as $r \to 0$, which completes the proof of \eqref{eq2244sat}.

To show \eqref{eq1617th}, we invoke the contraction mapping theorem.
For $(t,x) \in \bR^{d+1}$, let $\mathscr B=L_1((t-\delta_0^2, t)\times \bR^d)$, where $\delta_0$ is as in \eqref{eq1604f}.
We shall show that the mapping $T: \mathscr B \to \mathscr B$ defined by
\[
T u(\tau, \xi) =  w_0(t,x, \tau, \xi) + \int_{\tau}^t \int_{\bR^d}  u(s,y)  (a_{ij}(s,y)-a_{ij}(s,\xi))D_{ij} \Phi^{\xi}(s,y, \tau,\xi)\,dy ds
\]
is a contraction.
Indeed, by \eqref{eq1605f}, \eqref{eq1541f}, Fubini's theorem, and \eqref{eq1604f}, we find that
\begin{align*}
&\int_{t-\delta_0^2}^t \int_{\bR^d} \abs{Tu(\tau, \xi)}\,d\xi d\tau \le  \int_{t-\delta_0^2}^t \int_{\bR^d} \abs{w_0(t,x, \tau, \xi)} \,d\xi d\tau\\
&\quad+ \int_{t-\delta_0^2}^t \int_{\bR^d} \int_\tau^t \int_{\bR^d} \abs{u(s,y)} C_0'\frac{ \varrho_{\mathbf A}^{\mathsf x}(\abs{y-\xi})}{(s-\tau)^{d/2}} \left(\frac{1}{s-\tau}+\frac{\abs{y-\xi}^2}{(s-\tau)^2}\right)  e^{-\kappa_0 \frac{\abs{y-\xi}^2}{s-\tau}}\,dy ds d\xi d\tau\\
&\le  \int_{t-\delta_0^2}^t \int_{\bR^d}\varepsilon_0 C_0  \frac{1}{(t-\tau)^{d/2}} e^{-\kappa_0' \frac{\abs{x-\xi}^2}{t-\tau}}\,d\xi d\tau\\
&\quad + \int_{t-\delta_0^2}^t \int_{\bR^d}\abs{u(s,y)} \int_{t-\delta_0^2}^s   \frac{\varrho_{\mathbf A}^{\mathsf x}(\sqrt{s-\tau})}{s-\tau} \int_{\bR^d} \frac{C_0' C_1}{(s-\tau)^{d/2}}  e^{-\kappa_0' \frac{\abs{y-\xi}^2}{s-\tau}}\,d\xi d\tau dy ds\\
&\le  \varepsilon_0 C_0 C_2\delta_0^2 + \varepsilon_0  \int_{t-\delta_0}^t \int_{\bR^d}\abs{u(s,y)}\,dyds.
\end{align*}
Therefore, we have $T u \in \mathscr B$ for all $u \in \mathscr B$.
By a similar calculation, we also find that
\[
\norm{Tu_1 - Tu_2}_{\mathscr B} \le \varepsilon_0 \norm{u_1-u_2}_{\mathscr B},
\]
which implies $T$ is a contraction mapping on $\mathscr B$ since we assume $\varepsilon_0 \in (0,1)$.
We now fix $\varepsilon_0 =1/2$.
Note that it follows from \eqref{eq1742sun} and \eqref{eq1657f}, respectively, that $v \in \mathscr B$ and $w \in \mathscr B$, which establishes the equality \eqref{eq1617th}.

\subsection{Conclusion}
Therefore, by \eqref{eq1601th} \eqref{eq1617th}, \eqref{eq1657f}, and \eqref{eq1638f}, we find that
\begin{equation}							\label{eq1721f}
\abs{\Gamma(t,x, s, y)} \le  \frac{C}{(t-s)^{d/2}} e^{-\kappa_0' \frac{\abs{x-y}^2}{t-s}}\quad\text{provided } 0< t-s \le \delta_0^2.
\end{equation}
We can take $\kappa_0'=\kappa_0/2$ in the above and call it $\kappa$. It is clear that $\kappa$ then depends only on $\lambda$ and $\Lambda$.
By using \eqref{eq1549th} and \eqref{eq1721f}, we establish the Gaussian bound \eqref{eq1743sun}.
See e.g., \cite[Section~5.5]{CDK08} for the details. \qed


\end{document}